\def\sm{\rm \setminus}  
\def\id{{\rm \textsf{id}}}
\def\Aut{{\rm \textsf{Aut}}}
\begin{document}
\begin{sloppypar}

\newtheorem{theorem}{Theorem}[section]
\newtheorem{problem}[theorem]{Problem}
\newtheorem{corollary}[theorem]{Corollary}
\newtheorem{definition}[theorem]{Definition}
\newtheorem{conjecture}[theorem]{Conjecture}
\newtheorem{question}[theorem]{Question}

\newtheorem{lemma}[theorem]{Lemma}
\newtheorem{proposition}[theorem]{Proposition}
\newtheorem{quest}[theorem]{Question}
\newtheorem{example}[theorem]{Example}
\newcommand{\remark}{\medskip\par\noindent {\bf Remark.~~}}
\newcommand{\pp}{{\it p.}}
\newcommand{\de}{\em}

\title{  {Some properties of  generalized Cayley graphs}\thanks{This research was supported by  NSFC (No. 12071484).
E-mail addresses: liaoqianfen@163.com(Q.F. Liao),  wjliu6210@126.com(W.J. Liu), zpengli0626@163.com($^\dag$P.-L. Zhang, corresponding author).}}

\author{Qianfen Liao$^1$, Weijun Liu$^{2,3}$, Peng-Li Zhang$^4\dag$\\
{\small $^1$ Department of Mathematics, Guangdong University of Education, } \\
{\small Guangzhou,  Guangdong, 510303, P.R. China.}\\
{\small $^{2}$ School of Mathematics and Statistics, Central South University} \\
{\small New Campus, Changsha, Hunan, 410083, P.R. China. }\\
{\small $^3$ College of General Education, Guangdong University of Science and Technology,} \\
{\small Dongguan, Guangdong, 523083, P.R. China}\\
{\small $^4$ School of Statistics and Information, Shanghai University of International} \\
{\small Business and Economics, Shanghai 201620, P.R. China}
}
\maketitle

\vspace{-0.5cm}

\begin{abstract}
In this paper, firstly, we provide some necessary  and sufficient conditions for generalized Cayley graphs on abelian groups  to be   bipartite. Secondly, we deduce several necessary  and sufficient conditions for generalized Cayley graphs on  finite groups   to be  connected.
At last, as a by-product, we determine the groups whose all cubic generalized Cayley   graphs are connected and integral.
\end{abstract}

{{\bf Keywords:} generalized Cayley graph; connected; bipartite; integral.}

\section{Introduction}
All graphs considered in this paper are finite and undirected. Let $X$ be a graph with vertex set $V(X)$ and edge set $E(X).$
For graphs $X$ and $Y$, the {\it direct product} $X \times Y$ of $X$ and $Y$ is the graph with vertex set $V(X\times Y )=\{\{x,y\}\mid x\in V(X), y\in V(Y)\}$ and
two vertices
$(x_1, y_1)$ and $(x_2, y_2)$ are adjacent in $X\times Y$ if and only if $x_1$ is adjacent to $x_2$ in $X$ and $y_1$ is adjacent to $y_2$ in $Y$.
Let $G$ be a finite group.
An element $x$ of $G$ is  a {\it square} if $x=g^2$ for some $g\in G.$
A subset $S$ of $G$ is  {\it square free} if each element in $S$ is not a square.
For two subsets $S_1$ and  $S_2$ of $G$, denote $S_1S_2=\{s_1s_2\mid s_1\in S_1, s_2\in S_2\}$.
Especially, if  $G$ is an abelian group but not an  elementary abelian $2$-group, let $\iota$ be the {\it inverse
automorphism} $g\mapsto g^{-1}$ for all $g\in G$.
A {\it cubic} graph is a regular graph of degree $3.$

The study on generalized Cayley graphs originates from  Maru\v{s}i\v{c} et al. \cite{D.R.N}, who introduced the definition of generalized Cayley graphs as follows and provided some examples of non-Cayley generalized Cayley graphs.

\begin{definition}\label{def1.1}
Let  $G$ be a finite group. For $S\subseteq G$ and $\alpha \in \Aut(G)$, if they satisfy the following  conditions,
\begin{enumerate}[{\rm(a)}]

\item $\alpha^2=\id$, where $\id$ is the identity of $\Aut(G)$;

\item for any $g \in G$, $\alpha(g^{-1}) g\notin S$;

\item for $g,h\in G$, if $\alpha(h^{-1}) g\in S$, then $\alpha(g^{-1}) h\in S$,
\end{enumerate}
then the graph with vertex set $G$ and edge set $\{\{g,h\}\mid \alpha(g^{-1}) h \in S\}$ is denoted by $GC(G,S,\alpha)$.
We call $S$ a generalized Cayley subset of $G$ induced by $\alpha$ and $GC(G,S,\alpha)$ a generalized Cayley graph with respect to the ordered pair $(S, \alpha)$.
Especially, if $\alpha=\id$, then $GC(G, S, \alpha)$ is a Cayley graph.
\end{definition}

In order to differ from Cayley graphs, all automorphisms considered in this paper that inducing generalized Cayley graphs are assumed to be some involutions. Let $G$ be a finite group and $S$ a generalized Cayley subset of $G$ induced by  the involutory automorphism $\alpha$.
For an involutory automorphism $\alpha$, let $\omega_\alpha(G)=\{ \omega_\alpha(g)=\alpha(g^{-1})g \mid g\in G\}$, $\Omega_\alpha(G)=\{g\in G \mid  g \in G\sm \omega_\alpha(G) ~\text{and}~ \alpha(g)=g^{-1}\}$ and $\mho_\alpha(G)=\{g\in G \mid \alpha(g) \neq g^{-1}\}$.
Clearly, the subsets $\omega_\alpha(G)$, $\Omega_\alpha(G)$ and $\mho_\alpha(G)$ form a partition of $G$.
In addition, the conditions $(b)$ and $(c)$ in Definition \ref{def1.1} are equivalent to $S\cap \omega_\alpha(G)=\emptyset$ and $\alpha(S^{-1})=S,$ which implies that,  if $s\in S\cap \mho_\alpha(G)$, then $\alpha(s^{-1})\in S$.
Moreover, if the size of $S$ is odd, then $S\cap \Omega_\alpha(G)\neq\emptyset$.

So far, the research on generalized Cayley graphs is mainly focused on finding examples of non-Cayley generalized Cayley graphs \cite{A.K.D, A.K.P.A}, and  studying isomorphism problems of  generalized Cayley graphs \cite{L,X.W.J.L,X.W.L,Z1}. Recently, Zhu et al. \cite{Z2} studied the spectra of generalized Cayley graphs on finite abelian groups  with the help of  result in \cite{A}, which charaterized the connection between  Cayley graphs and the double cover of generalized Cayley graphs.
As far as we know, these are  all the results  about generalized Cayley graphs up to now.

Let $G$ be an abelian group and $S$ a subset of  $G$.
The Cayley  sum graph $Cay^+(G,S)$~\cite{C.G.W}  is a graph with   vertex set $G$ and   edge set $\{\{g,h\}\mid g,h\in G, gh\in S\}$, where $S$ is a {square free} subset of $G$.
Clearly, if $G$ is an abelian group but not an elementary abelian $2$-group, then the Cayley  sum graph of $G$ is a special generalized Cayley graph of $G$ under the inverse automorphism $\iota$.
Inspired the results of Cayley sum graphs in~\cite{A.T, A.T2, C.G.W}, we are concerned with the necessary and sufficient  conditions for generalized Cayley graphs to be connected and bipartite, respectively.

This paper is organized as follows. In Section~2,
we provide  some necessary and sufficient  conditions for generalized Cayley graphs of abelian groups to be bipartite.
In Section $3$, some necessary and sufficient  conditions for generalized Cayley graphs to be connected are derived.
In Section $4$, we determine the groups whose all cubic generalized Cayley   graphs are connected and integral  by the aid of the results in  Section $3.$

\section{Bipartite generalized Cayley graphs}
In this section, we investigate the conditions for  generalized Cayley graphs of abelian groups to be bipartite.
First, we derive the following property of generalized Cayley graphs on abelian groups.

Let $G$ be an abelian group. For each element $g\in G$, let $R(g): x\mapsto xg$ be a permutation on $G$ and $R(G)=\{R(g)\mid g\in G\}$.
Let $S$ be a generalized Cayley subset of $G$ induced by $\alpha$ and \begin{equation*}
G_\alpha(S)=\{g\in G\mid \alpha(g)Sg^{-1}=S\}=\{g\in G\mid \omega_\alpha(g^{-1})S=S\}.
\end{equation*}
Clearly, $\omega_\alpha(g^{-1})S=S$ is equivalent to $\omega_\alpha(g)S=S$.
Thus, $g\in G_\alpha(S)$ if and only if $g^{-1}\in G_\alpha(S)$.
The following proposition for a generalized Cayley graph  holds.

\begin{proposition}
$R(G)\cap \mathrm{Aut}(\Gamma)=R(G_\alpha(S))$.
\end{proposition}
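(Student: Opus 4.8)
The plan is to show the two inclusions $R(G_\alpha(S))\subseteq R(G)\cap\Aut(\Gamma)$ and $R(G)\cap\Aut(\Gamma)\subseteq R(G_\alpha(S))$, working directly from the adjacency rule $g\sim h\iff \alpha(g^{-1})h\in S$. Throughout I write $\Gamma=GC(G,S,\alpha)$ and use that $G$ is abelian, so $\alpha$ is an automorphism of an abelian group and $\omega_\alpha(g)=\alpha(g^{-1})g=\alpha(g)^{-1}g$.

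First I would prove the forward inclusion. Fix $a\in G_\alpha(S)$, so $\alpha(a)Sa^{-1}=S$, and take any edge $\{g,h\}$ of $\Gamma$, i.e.\ $\alpha(g^{-1})h\in S$. I want $\{R(a)g,R(a)h\}=\{ga,ha\}$ to be an edge, i.e.\ $\alpha((ga)^{-1})(ha)\in S$. Using that $G$ is abelian and $\alpha$ a homomorphism, $\alpha((ga)^{-1})(ha)=\alpha(a^{-1})\alpha(g^{-1})h\,a=\alpha(a)^{-1}\bigl(\alpha(g^{-1})h\bigr)a$. Since $\alpha(g^{-1})h\in S$ and $\alpha(a)^{-1}Sa=\alpha(a)^{-1}S a$ equals $S$ (this is exactly the defining relation $\alpha(a)Sa^{-1}=S$ rearranged, using commutativity), we get $\alpha((ga)^{-1})(ha)\in S$, so $R(a)$ preserves edges; applying the same to $a^{-1}\in G_\alpha(S)$ shows $R(a)$ preserves non-edges as well, hence $R(a)\in\Aut(\Gamma)$. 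Thus $R(G_\alpha(S))\subseteq R(G)\cap\Aut(\Gamma)$.

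For the reverse inclusion, suppose $R(a)\in\Aut(\Gamma)$ for some $a\in G$. I would test the automorphism condition on edges incident to the identity $e$: for $s\in G$ we have $e\sim s$ iff $\alpha(e^{-1})s=s\in S$, so the neighbourhood of $e$ is exactly $S$. Applying $R(a)\in\Aut(\Gamma)$, the vertex $R(a)e=a$ has neighbourhood $R(a)S=Sa$. On the other hand, computing the neighbourhood of $a$ directly from the adjacency rule: $a\sim x$ iff $\alpha(a^{-1})x\in S$ iff $x\in \alpha(a)S$. Hence $Sa=\alpha(a)S$, which rearranges (using commutativity) to $\alpha(a)Sa^{-1}=S$, i.e.\ $a\in G_\alpha(S)$. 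This gives $R(G)\cap\Aut(\Gamma)\subseteq R(G_\alpha(S))$, and combining the two inclusions finishes the proof.

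The only subtlety—and the step I would state most carefully—is the bookkeeping of where $\alpha$ and the inverse interact with the group multiplication; because $G$ is abelian the expressions $\alpha(g^{-1})h$, $\alpha(g)^{-1}h$, and $h\alpha(g)^{-1}$ all coincide, so the "rearrangements" above are genuinely valid, but one should check that the set identity $\alpha(a)S a^{-1}=S$ is literally the same condition used to define $G_\alpha(S)$ (it is, by the displayed definition in the paper). I do not expect any real obstacle beyond this; the proposition is essentially the standard "stabilizer of the base point under right translations equals the set of $a$ with $S^a=S$" computation, transported through the twist by $\alpha$.
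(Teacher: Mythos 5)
Your proof is correct and follows essentially the same route as the paper: a direct verification against the adjacency rule, using $\alpha(a)Sa^{-1}=S$ together with $a^{-1}\in G_\alpha(S)$ for the inclusion $R(G_\alpha(S))\subseteq\Aut(\Gamma)$, and reading off the membership condition from the image of an edge/neighbourhood for the converse. The only cosmetic difference is that you test the automorphism on the neighbourhood of the identity (which even gives the set equality $Sa=\alpha(a)S$ directly), whereas the paper tests it on arbitrary edges $\{g,\alpha(g)s\}$; this is a minor variation, not a different argument.
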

\begin{proof}
Assume $R(h)$ is an automorphism of $\Gamma$, then for any element $g\in G$ and $s\in S$, $\{gh, \alpha(g)sh\}\in E(\Gamma)$.
It follows that $\alpha(gh)^{-1}\alpha(g)sh=\omega_\alpha(h)s\in S$.
Thus, $h\in G_\alpha(S)$.
Conversely, for any element $g\in G_\alpha(S)$, we claim that $R(g)\in \mathrm{Aut}(\Gamma)$.
Observe that
\begin{equation*}
\omega_\alpha(g)\alpha(g_1^{-1})g_2=\alpha(g^{-1})g\alpha(g_1^{-1})g_2=
\alpha((g_1g)^{-1})(g_2g).
\end{equation*}
Since both $g$ and $g^{-1}$ are contained in $G_\alpha(S)$,  $\alpha(g_1^{-1})g_2\in S$ if and only if $\alpha((g_1g)^{-1})(g_2g)\in S$.
Therefore, $R(g)\in \mathrm{Aut}(\Gamma)$.
\end{proof}


\begin{lemma}\label{lem1.2}
Let $\alpha$ be an involutory automorphism of an abelian group $G$.
If $g_1g_2\in \omega_\alpha(G)$ and $g_1\in \omega_\alpha(G)$, then $g_2\in \omega_\alpha(G)$.
Moreover, if $h_1,h_2\in \omega_\alpha(G)$, then $h_1^nh_2^m\in \omega_\alpha(G)$ for all integers $n$ and $m$.
\end{lemma}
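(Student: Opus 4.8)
The plan is to work directly with the set $\omega_\alpha(G) = \{\alpha(g^{-1})g \mid g \in G\}$, using additively nothing beyond the abelian group structure and the fact that $\alpha$ is a homomorphism with $\alpha^2 = \id$. Throughout I would write $\omega_\alpha(g) = \alpha(g^{-1})g = \alpha(g)^{-1}g$ for the typical element, and I would record first the key multiplicativity identity: for any $a, b \in G$,
\[
\omega_\alpha(a)\,\omega_\alpha(b) = \alpha(a)^{-1}a\,\alpha(b)^{-1}b = \alpha(ab)^{-1}(ab) = \omega_\alpha(ab),
\]
where the middle equality uses that $G$ is abelian (so the factors commute) and $\alpha$ is a homomorphism. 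Thus $\omega_\alpha \colon G \to G$ is itself a group homomorphism onto $\omega_\alpha(G)$, and in particular $\omega_\alpha(G)$ is a subgroup of $G$.

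**Proving the first assertion.**

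Given this, the first claim is immediate: suppose $g_1 g_2 \in \omega_\alpha(G)$ and $g_1 \in \omega_\alpha(G)$. Since $\omega_\alpha(G)$ is a subgroup, $g_1^{-1} \in \omega_\alpha(G)$, and hence $g_2 = g_1^{-1}(g_1 g_2) \in \omega_\alpha(G)$. (If one prefers to avoid invoking the subgroup property, one can argue explicitly: write $g_1 = \omega_\alpha(a)$ and $g_1 g_2 = \omega_\alpha(c)$ for suitable $a, c \in G$; then $g_2 = \omega_\alpha(a)^{-1}\omega_\alpha(c) = \omega_\alpha(a^{-1}c)$ by the identity above, so $g_2 \in \omega_\alpha(G)$.)

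**Proving the second assertion and the main obstacle.**

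For the "moreover" part, if $h_1, h_2 \in \omega_\alpha(G)$ then, since $\omega_\alpha(G)$ is closed under products and inverses, any $h_1^n h_2^m$ with $n, m \in \mathbb{Z}$ lies in $\omega_\alpha(G)$. Alternatively, writing $h_1 = \omega_\alpha(a)$ and $h_2 = \omega_\alpha(b)$, the multiplicativity identity gives $h_1^n h_2^m = \omega_\alpha(a^n b^m) \in \omega_\alpha(G)$ directly. Honestly, I expect there to be no real obstacle here: the whole lemma is an unpacking of the observation that $\omega_\alpha$ is a group endomorphism of the abelian group $G$. The only point requiring a moment's care is the verification of the multiplicativity identity, where one must use commutativity of $G$ to rearrange $\alpha(a)^{-1} a\, \alpha(b)^{-1} b$ into $(\alpha(a)\alpha(b))^{-1}(ab)$; everything else is formal. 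It might be worth remarking that the abelian hypothesis is genuinely used and the statement can fail in the nonabelian case, but that is not needed for the proof itself.
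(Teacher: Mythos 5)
Your proof is correct and rests on the same computation as the paper's: the identity $\omega_\alpha(a)\omega_\alpha(b)=\omega_\alpha(ab)$ (valid because $G$ is abelian and $\alpha$ is a homomorphism) is exactly what the paper verifies by hand when it writes $g_2=\alpha(a_1^{-1}a_2)a_1a_2^{-1}$ and $h_1^nh_2^m=\alpha(b_1^{-n}b_2^{-m})b_1^nb_2^m$. You merely package it more conceptually, observing that $\omega_\alpha$ is an endomorphism so $\omega_\alpha(G)$ is a subgroup, which is a harmless (and arguably cleaner) reformulation of the same argument.
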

\begin{proof}
Assume $g_1g_2=\alpha(a_1^{-1})a_1$ and $g_1=\alpha(a_2^{-1})a_2$ for some $a_1,a_2\in G$.
Then $g_2=\alpha(a_1^{-1}a_2)a_1a_2^{-1}\in \omega_\alpha(G)$.
If $h_1=\alpha(b_1^{-1})b_1$ and $h_2=\alpha(b_2^{-1})b_2$ for some $b_1,b_2\in G$,
then for any integers $n$ and $m$, $h_1^nh_2^m=\alpha(b_1^{-n}b_2^{-m})b_1^nb_2^m\in \omega_\alpha(G)$.
\end{proof}

\begin{lemma}\label{lem1.3}
Let $H$ be a graph.
If  $H$ contains a closed walk of odd length, then $H$ contains an odd cycle.
\end{lemma}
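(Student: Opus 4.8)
The plan is to run an extremal argument on the length of an odd closed walk. Concretely, among all closed walks of $H$ of odd length, choose one, say $W: v_0, v_1, \dots, v_k$ with $v_0 = v_k$ and $k$ odd, for which $k$ is as small as possible; such a walk exists by hypothesis. Since $H$ is a simple undirected graph, it has no loops, so $k \geq 3$. I claim $W$ is itself an odd cycle, which finishes the proof.

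To prove the claim, I would first show that the vertices $v_0, v_1, \dots, v_{k-1}$ are pairwise distinct. Suppose to the contrary that $v_i = v_j$ for some $0 \le i < j \le k - 1$. Splitting $W$ at this repeated vertex produces two closed walks, namely $W_1 : v_i, v_{i+1}, \dots, v_j$ of length $\ell_1 = j - i$, and $W_2 : v_0, v_1, \dots, v_i, v_{j+1}, \dots, v_k$ of length $\ell_2 = k - (j - i)$, where in $W_2$ we use $v_i = v_j$ to glue the two pieces together. Both lengths are strictly positive: $\ell_1 \ge 1$ because $i < j$, while $\ell_2 \ge 1$ because $j \le k-1$ and $i \ge 0$ force $j - i \le k - 1 < k$. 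Since $\ell_1 + \ell_2 = k$ is odd, exactly one of $\ell_1, \ell_2$ is odd, and the corresponding walk is a closed walk of odd length strictly less than $k$ --- contradicting the minimality of $W$. Hence $v_0, \dots, v_{k-1}$ are pairwise distinct.

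Finally, because consecutive vertices of $W$ are adjacent in $H$, the vertices $v_0, \dots, v_{k-1}$ are distinct, and $v_k = v_0$ with $k \geq 3$, the closed walk $W$ is a cycle of (odd) length $k$, as claimed. There is no real obstacle here: the only point requiring care is the index bookkeeping in the splitting step and the observation that both resulting walks have positive length, so that the shorter odd closed walk is a genuine one and the minimality of $W$ is really contradicted. The argument can equivalently be phrased as an induction on the length $k$ of the given odd closed walk, with the same splitting performed at a repeated interior vertex.
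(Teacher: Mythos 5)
Your proof is correct and follows essentially the same argument as the paper: take a shortest odd closed walk, split it at a repeated vertex into two closed subwalks whose lengths sum to the odd total, and derive a contradiction with minimality from the strictly shorter odd piece. Your version is slightly more careful with the index bookkeeping (checking both pieces have positive length strictly less than $k$), but the underlying idea is identical.
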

\begin{proof}
Let $W=v_0e_1v_1e_2\cdots v_{k-1}e_kv_0$ be a shortest odd-length closed walk of $H$, where $k$ is odd.
If there exist vertices $v_i$ and $v_j$ such that $i\neq j$ and $v_i=v_j$,
then we obtain two closed walks $W_1=v_ie_{i+1}\cdots e_jv_j$ and $W_2=v_0e_1\cdots v_{i-1}e_iv_ie_{j+1}v_{j+1}\cdots v_{k-1}e_kv_0$.
Note that one of the walks $W_1$ and $W_2$ is odd-length, which contradicts the choice of $W$.
Thus, the vertices in $W$ are all distinct, which implies $W$ is an odd cycle.
\end{proof}

During the proof,  the following  fact  will be  used frequently.\\
{\bf Fact}: For any  $s\in S\cap \Omega_\alpha(G)$, we have $s^2=\alpha(s^{-1})s\in \omega_\alpha(G)$ and further  $s^{2k}\in \omega_\alpha(G)$ for any integer $k$.

\begin{theorem}\label{thm2.6}
Let $GC(G, S, \alpha)$ be a generalized Cayley graph.
Then $GC(G,S,\alpha)$ is not bipartite if and only if for each $s\in S$, there exists an non-negative integer $k_s$ such that $\Sigma_{s\in S}k_s$ is odd
 and $\prod _{s\in S}s^{k_s}\in \omega_\alpha(G)$.
\end{theorem}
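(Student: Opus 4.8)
The plan is to translate bipartiteness into the existence (or absence) of odd closed walks, and then describe odd closed walks algebraically in $G$. By Lemma~\ref{lem1.3}, $GC(G,S,\alpha)$ is non-bipartite if and only if it contains a closed walk of odd length. So I would start by fixing a basepoint, say the identity $e$, and analyzing what an edge of $\Gamma=GC(G,S,\alpha)$ looks like when we walk along it. The key observation is that $g\sim h$ means $\alpha(g^{-1})h\in S$, i.e. $h=\alpha(g)s$ for some $s\in S$; equivalently $\alpha(h^{-1})g\in S$, so moving along an edge alternately applies $\alpha$ and right-multiplies by an element of $S$. Concretely, a walk $e=v_0,v_1,\dots,v_\ell$ with $v_i=\alpha(v_{i-1})s_i$ ($s_i\in S$) satisfies, by induction and using $\alpha^2=\id$ together with commutativity of $G$,
\[
v_\ell=\begin{cases}\alpha(s_1)s_2\alpha(s_3)s_4\cdots \alpha(s_{\ell-1})s_\ell,&\ell\text{ even},\\[1mm]\alpha(s_1)s_2\cdots s_{\ell-1}\alpha(s_\ell),&\ell\text{ odd}.\end{cases}
\]
Thus a closed walk of odd length $\ell$ through $e$ exists if and only if there are $s_1,\dots,s_\ell\in S$ with $\alpha(s_1)s_2\alpha(s_3)\cdots s_{\ell-1}\alpha(s_\ell)=e$, and since $R(G_\alpha(S))\le\Aut(\Gamma)$ acts transitively enough (indeed $\Gamma$ is vertex-transitive under $R(G)$ when the graph is a Cayley-type object — but even without that, the condition "some odd closed walk exists" is independent of basepoint in any graph), it suffices to look at closed walks through $e$.

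Next I would simplify the expression $\alpha(s_1)s_2\alpha(s_3)\cdots$. Using that $G$ is abelian, group together the $\alpha$-terms and the non-$\alpha$-terms; write $a=s_1s_3\cdots s_\ell$ (product of the odd-indexed $s_i$, there are $(\ell+1)/2$ of them) and $b=s_2s_4\cdots s_{\ell-1}$ (product of the even-indexed ones, $(\ell-1)/2$ of them). Then the closure condition becomes $\alpha(a)b=e$, i.e. $b=\alpha(a^{-1})$... but wait, I need $\alpha(a)b=e$ where the total number of factors is $\ell$, odd, and the number of $\alpha$-factors is $(\ell+1)/2$. Here the crucial trick: $\alpha(a)b = \alpha(a)b$, and I want to relate this to $\omega_\alpha$. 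Multiply through: the condition $\alpha(a)b=e$ with $a,b$ products of elements of $S$ is, after reindexing, exactly the statement that $\alpha(a)a^{-1}\cdot(ab)=e$, so $ab = \alpha(a^{-1})a = \omega_\alpha(a^{-1})\in\omega_\alpha(G)$. Now $ab=\prod_{s\in S}s^{k_s}$ where $k_s$ counts the total multiplicity of $s$ among $s_1,\dots,s_\ell$, and $\sum_{s}k_s=\ell$ is odd. Conversely, given non-negative integers $k_s$ with $\sum k_s=:\ell$ odd and $\prod s^{k_s}\in\omega_\alpha(G)$, I need to realize this as an actual odd closed walk — i.e. exhibit an alternating sequence $s_1,\dots,s_\ell$ achieving it. Here I would use the freedom in how we interleave: since $\ell$ is odd I can split the multiset into odd-position and even-position slots in essentially any way, and the Fact about squares of elements of $S\cap\Omega_\alpha(G)$ together with Lemma~\ref{lem1.2} guarantees that the "correction terms" $s\alpha(s)$ land back in $\omega_\alpha(G)$, so I can pad the walk to make everything consistent. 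This direction is the delicate one.

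The main obstacle I anticipate is precisely the converse (sufficiency) direction: given the arithmetic data $(k_s)_{s\in S}$, one must produce a genuine walk, which requires controlling not just the product $\prod s^{k_s}$ but the alternating $\alpha$-twisted product that an actual walk computes. The resolution should go through the following two devices, which is why the authors isolated them: first, for $s\in S\cap\mho_\alpha(G)$ the edge condition forces $\alpha(s^{-1})\in S$ as well, so we can always "return" along a companion generator, and a length-$2$ detour $g\to\alpha(g)s\to g\cdot\omega_\alpha$-type-term contributes an element of $\omega_\alpha(G)$ by Lemma~\ref{lem1.2}; second, for $s\in S\cap\Omega_\alpha(G)$ we have $s^2\in\omega_\alpha(G)$ by the Fact, so generators that are "self-paired" also only ever contribute $\omega_\alpha(G)$-elements in pairs. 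Combining these, any assignment with $\sum k_s$ odd can be routed into a closed walk whose length has the same parity as $\sum k_s$, i.e. odd, and whose endpoint-defect lies in $\omega_\alpha(G)$ exactly when $\prod s^{k_s}\in\omega_\alpha(G)$; then Lemma~\ref{lem1.3} upgrades the odd closed walk to an odd cycle, certifying non-bipartiteness. For the forward direction, an odd cycle is in particular an odd closed walk, and the computation above extracts the required $(k_s)$. I would present the walk-to-product computation as a standalone lemma (induction on $\ell$) to keep the parity bookkeeping clean, since that is where sign/index errors are easiest to make.
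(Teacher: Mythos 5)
There is a genuine gap, and it sits exactly where you flagged it: the sufficiency direction. First, your reduction to closed walks through the identity rests on a false claim: the existence of an odd closed walk is \emph{not} basepoint-independent in a disconnected graph, and $GC(G,S,\alpha)$ is in general neither connected nor vertex-transitive (the right-translation result only gives $R(G)\cap\Aut(\Gamma)=R(G_\alpha(S))$, and Sections 3--4 of the paper exhibit many disconnected generalized Cayley graphs). For necessity this is harmless: simply read off the edge relations around the odd cycle wherever it lies; multiplying $\alpha(x_i^{-1})x_{i+1}=s_i$ over an odd cycle $(x_0,\dots,x_{2k},x_0)$ gives at once $s_0\cdots s_{2k}=\alpha\bigl((x_0\cdots x_{2k})^{-1}\bigr)(x_0\cdots x_{2k})\in\omega_\alpha(G)$, which is cleaner than your $e$-based formula (in which, incidentally, the $\alpha$'s should fall on the even-indexed $s_i$ when $\ell$ is odd, since $v_i=\alpha(v_{i-1})s_i$ yields $v_3=s_1\alpha(s_2)s_3$, and so on).

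For sufficiency, however, anchoring at $e$ is fatal: a closed odd walk through $e$ forces the $\alpha$-twisted alternating product of the $s_i$ to equal $e$ exactly, which is strictly stronger than $\prod_{s\in S}s^{k_s}\in\omega_\alpha(G)$, and ``padding'' with correction terms lying in $\omega_\alpha(G)$ can never repair this, nor can a walk be transported into the component of $e$. The missing device --- the heart of the paper's argument --- is to choose the basepoint \emph{from} the hypothesis rather than fixing it in advance: list the multiset as $s_1,\dots,s_{2t+1}$ with $2t+1=\Sigma_{s\in S}k_s$, use the identity $s^{-1}\alpha(s)=\omega_\alpha(\alpha(s))\in\omega_\alpha(G)$ (valid for every $s\in G$, so no appeal to the Fact or to companion generators in $\mho_\alpha(G)$ is needed) together with Lemma~\ref{lem1.2} to conclude $\prod_{j=1}^{t+1}s_j\prod_{\ell=t+2}^{2t+1}\alpha(s_\ell)\in\omega_\alpha(G)$, write this element as $\alpha(g^{-1})g$, and then start the interleaved walk at $g$, appending $s_1,s_{t+2},s_2,s_{t+3},\dots,s_{t+1}$ in turn; it closes up at $g$ after $2t+1$ steps because the terminal vertex is $\alpha(g)\prod_{j=1}^{t+1}s_j\,\alpha\bigl(\prod_{\ell=t+2}^{2t+1}s_\ell\bigr)=\alpha(g)\alpha(g^{-1})g=g$, and Lemma~\ref{lem1.3} then upgrades this odd closed walk to an odd cycle. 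Without this choice of $g$ your sketch does not actually produce a closed walk, so the backward implication is not established.
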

\begin{proof}
Necessity.
Since $GC(G,S,\alpha)$ is not bipartite, it contains an odd cycle $C_{2k+1}=(x_0, x_1, \cdots, x_{2k}, x_{2k+1}=x_0)$, which implies that $\alpha(x_i^{-1})x_{i+1}=s_i\in S$ for each integer $0\leq i\leq 2k$.
Then $\alpha((x_0\cdots x_{2k})^{-1})(x_0\cdots x_{2k})=s_0\cdots s_{2k}\in \omega_\alpha(G)$.

Sufficiency.
Assume that for each $s\in S$, there exists an integer  $k_s$ such that $\Sigma_{s\in S}k_s=2t+1$ is odd  and $\prod _{s\in S}s^{k_s}\in \omega_\alpha(G)$.
Let $T=\{s_1, s_2, \cdots, s_{2t+1}\}$, where for $1\leq j\leq 2t+1$, $s_j$ are not necessary distinct and  for each $s\in S$, $s$ appears in $T$ exactly $k_s$ times.
Then $\prod _{s\in S}s^{k_s}=\prod _{j=1}^{2t+1}s_j$.
We divide $T$ into two subsets $T_1$ and $T_2$ such that the size of $T_1$ and $T_2$ are $t+1$ and $t$, respectively.
Let $T_1=\{s_1, s_2, \cdots, s_{t+1}\}$ and $T_2=\{s_{t+2}, \cdots, s_{2t+1}\}$.
Since $\prod _{j=1}^{2t+1}s_j\in \omega_\alpha(G)$ and $s_\ell^{-1}\alpha(s_\ell)\in \omega_\alpha(G)$ for $t+2\leq \ell\leq 2t+1$,
by Lemma \ref{lem1.2}, we have
\begin{equation*}
\prod _{j=1}^{2t+1}s_j \prod _{\ell=t+2}^{2t+1}\left(s_\ell^{-1}\alpha(s_\ell)\right)=\prod _{j=1}^{t+1}s_j\prod _{\ell=t+2}^{2t+1}\alpha(s_\ell)\in \omega_\alpha(G).
\end{equation*}
Suppose $\prod _{j=1}^{t+1}s_j\prod _{\ell=t+2}^{2t+1}\alpha(s_\ell)=\alpha(g^{-1})g$ for some $g\in G$.
Then
\begin{equation*}
(g, \alpha(g)s_1, g\alpha(s_1)s_{t+2}, \cdots, g\prod _{j=1}^{t}\alpha(s_j)\prod _{j=1}^{t}s_{t+1+j},\alpha(g)\prod _{j=1}^{t+1}s_j\alpha\left(\prod _{j=1}^{t}s_{t+1+j}\right)=g)
\end{equation*}
is an odd closed walk in $GC(G, S, \alpha)$.
By Lemma \ref{lem1.3}, $GC(G, S, \alpha)$ contains an odd cycle and then it is not a bipartite graph.
\end{proof}

The following  two examples are applications of Theorem \ref{thm2.6}.

\begin{example}
For the cyclic group $Z_{14}=\langle g\rangle$, $S_1=\{g, g^3, g^5\}$ is a generalized Cayley subset of $Z_{14}$ induced by the inverse automorphism $\iota$ and contained in $\Omega_\iota(Z_{14})$.
The graph $GC(Z_{14}, S_1, \iota)$ is displayed in Figure \ref{fig4}.
Observe that for any non-negative integers $k_1$, $k_2$ and $k_3$ with $k_1+k_2+k_3$ being odd, $k_1+3k_2+5k_3$ being odd.
Since $\omega_\iota(Z_{14})=\langle g^2\rangle$, $g^{k_1}g^{3k_2}g^{5k_3}=g^{k_1+3k_2+5k_3}\notin \omega_\iota(Z_{14})$.
It follows from Theorem \ref{thm2.6}  that $GC(Z_{14}, S_1, \iota)$ is bipartite.
\begin{figure}[h]
\centering
\includegraphics[scale=0.5]{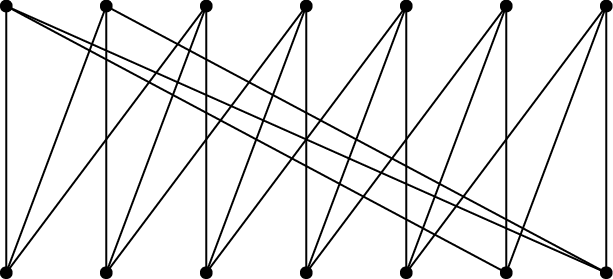}
\caption{$GC(Z_{14}, S_1, \iota)$}
\label{fig4}
\end{figure}
\end{example}

\begin{example}
For the group $\mathbb{Z}_2^2\times \mathbb{Z}_6$, $S_2=\{(1, 0, 0), (0, 0, 2), (0, 0, 4)\}$ is
a generalized Cayley subset of $\mathbb{Z}_2^2\times \mathbb{Z}_6$ induced by the involutory automorphism $\alpha: (1, 0, 0)\mapsto (1, 0, 0), (0, 1, 0)\mapsto (0, 1, 0), (0, 0, 1)\mapsto (0, 1, 1)$, where $(1, 0, 0)\in \Omega_\alpha(\mathbb{Z}_2^2\times \mathbb{Z}_6)$ and $\{(0, 0, 2), (0, 0, 4)\}\subseteq \mho_\alpha(\mathbb{Z}_2^2\times \mathbb{Z}_6)$.
Since $(0, 0, 2)^3=(0, 0, 0)\in \omega_\alpha(\mathbb{Z}_2^2\times \mathbb{Z}_6)$, it follows from Theorem \ref{thm2.6} that $GC(\mathbb{Z}_2^2\times \mathbb{Z}_6, S_2, \alpha)$ is not bipartite.
The graph $GC(\mathbb{Z}_2^2\times \mathbb{Z}_6, S_2, \alpha)$ is displayed in Figure \ref{fig5}.
\begin{figure}[h]
\centering
\includegraphics[scale=0.5]{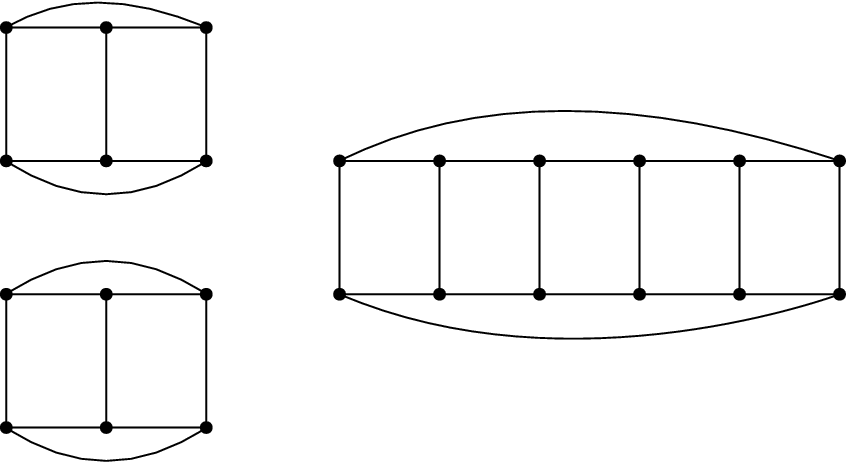}
\caption{$GC(\mathbb{Z}_2^2\times \mathbb{Z}_6, S_2, \alpha)$}
\label{fig5}
\end{figure}
\end{example}

\section{Connected generalized Cayley graphs}
In this section, we  provide the necessary and sufficient  conditions  for the generalized Cayley graphs to be connected.
We assume that $G$ is a finite group, $S$ is a generalized Cayley subset of $G$ induced by the involutory automorphism $\alpha$.
Firstly, a simple observation should be pointed out.

\begin{proposition}\label{pro3.0}
Let $GC(G, S, \alpha)$ be a generalized Cayley graph. Then
$\{x,y\}\in E(Cay(G, S^{-1}S))$ if and only if there exists a walk of length $2$ from $x$ to $y$ in $GC(G, S, \alpha)$.
\end{proposition}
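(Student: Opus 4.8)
The plan is to make the middle vertex of a length-$2$ walk explicit and to translate the adjacency relation of $GC(G,S,\alpha)$ into a product of two elements of $S$. Throughout I use two elementary facts: since $\alpha\in\Aut(G)$ with $\alpha^2=\id$ we have $\alpha(g^{-1})=\alpha(g)^{-1}$ and $\alpha(\alpha(g))=g$ for all $g\in G$; and, as observed after Definition~\ref{def1.1}, conditions (b) and (c) are equivalent to $\alpha(S^{-1})=S$, which upon applying $\alpha$ yields $\alpha(S)=S^{-1}$ and hence $S^{-1}S=\alpha(S)S$. As in the case $\alpha=\id$, the statement ``$\{x,y\}\in E(Cay(G,S^{-1}S))$'' means $x^{-1}y\in S^{-1}S$.

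For the ``if'' direction, suppose $x,z,y$ is a walk of length $2$ in $GC(G,S,\alpha)$, so that $s_1:=\alpha(x)^{-1}z\in S$ and $s_2:=\alpha(z)^{-1}y\in S$. From $z=\alpha(x)s_1$ we obtain $\alpha(z)=\alpha(\alpha(x)s_1)=x\,\alpha(s_1)$, hence $y=\alpha(z)s_2=x\,\alpha(s_1)s_2$; that is, $x^{-1}y=\alpha(s_1)s_2$. Since $\alpha(s_1)\in\alpha(S)=S^{-1}$ and $s_2\in S$, we get $x^{-1}y\in S^{-1}S$, i.e. $\{x,y\}\in E(Cay(G,S^{-1}S))$.

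For the converse, suppose $x^{-1}y\in S^{-1}S=\alpha(S)S$ and write $x^{-1}y=\alpha(s_1)s_2$ with $s_1,s_2\in S$. Set $z:=\alpha(x)s_1$. Then $\alpha(x)^{-1}z=s_1\in S$, so $x$ is adjacent to $z$ in $GC(G,S,\alpha)$; and, using $\alpha(z)=x\,\alpha(s_1)$ once more, $\alpha(z)^{-1}y=\alpha(s_1)^{-1}x^{-1}y=\alpha(s_1)^{-1}\alpha(s_1)s_2=s_2\in S$, so $z$ is adjacent to $y$. Hence $x,z,y$ is a length-$2$ walk from $x$ to $y$, which finishes the proof.

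The argument is just a short computation, so I do not expect a genuine obstacle; the only point requiring care is the order of multiplication combined with the repeated use of $\alpha^2=\id$, together with the reformulation (already available from the discussion after Definition~\ref{def1.1}) that the defining conditions force $\alpha(S)=S^{-1}$ --- this is precisely what makes the set of realizable ``two-step differences'' $x^{-1}y$ equal to $\alpha(S)S=S^{-1}S$ and nothing smaller. (In the degenerate case $x=y$ one reads $E(Cay(G,S^{-1}S))$ as having a loop at $x$, since $e=s^{-1}s\in S^{-1}S$; it corresponds to the walk $x,z,x$ for any neighbour $z=\alpha(x)s$ of $x$.)
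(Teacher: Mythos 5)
Your proof is correct and takes essentially the same route as the paper: both directions are the same direct computation, translating adjacency via $\alpha(S)=S^{-1}$ so that a two-step walk through the middle vertex $\alpha(x)s_1$ corresponds exactly to $x^{-1}y=\alpha(s_1)s_2\in S^{-1}S$. The paper merely parametrizes the middle vertex as $\alpha(x)\alpha(a^{-1})$ with $a\in S$ instead of $\alpha(x)s_1$, which is the same argument up to relabeling.
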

\begin{proof}
Notice that $e\in S^{-1}S$, it follows that $Cay(G, S^{-1}S)$ is an undirected  graph {with loops}.
If $\{x,y\}\in E(Cay(G, S^{-1}S))$, then there exist two elements $a,b\in S$ such that $y=x(a^{-1}b)$, which implies $\alpha(y)\alpha(b^{-1})=\alpha(x)\alpha(a^{-1})$.
Note that $\alpha(S^{-1})=S$, hence both $\alpha(a^{-1})$ and $\alpha(b^{-1})$ belong to $S$.
Therefore, $(y,\alpha(y)\alpha(b^{-1}), x)$ is a walk of length $2$ from $x$ to $y$ in $GC(G, S, \alpha)$.

Conversely, if there exists a walk $(x, \alpha(x)s_1, y)$ from $x$ to $y$, where $s_1\in S$,
then $y=\alpha(\alpha(x)s_1)s_2=x(\alpha(s_1)s_2)$ for some $s_2\in S$.
Recall $\alpha(S^{-1})=S$, we have  $\alpha(s_1)s_2\in S^{-1}S$.
Thus,  $\{x,y\}\in E(Cay(G, S^{-1}S))$.
\end{proof}

For the  generalized Cayley graph $GC(G,S,\alpha)$, we define an equivalence relation on $G$ by declaring  that $x$ is equivalent to $y$ if there exists a walk of even length from $x$ to $y$ in $GC(G,S,\alpha)$.
Denote by $\Theta$  the equivalence class containing the identity element $e$ of $G$.
Using this equivalence relation,  we obtain the following result.
\begin{lemma}\label{lem3.0}
Let $GC(G, S, \alpha)$ be a generalized Cayley graph.
If  $GC(G,S,\alpha)$ is connected, then $\Theta=\langle S^{-1}S\rangle$ and $|\langle S^{-1}S\rangle|\geq \frac{1}{2}|G|$.
\end{lemma}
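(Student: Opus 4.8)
The plan is to exploit Proposition~\ref{pro3.0}, which identifies the length-$2$ reachability in $GC(G,S,\alpha)$ with adjacency in the Cayley graph $Cay(G,S^{-1}S)$. First I would observe that the equivalence class $\Theta$ consists precisely of those $x\in G$ reachable from $e$ by an even-length walk, and that an even-length walk in $GC(G,S,\alpha)$ decomposes into a concatenation of length-$2$ pieces; by Proposition~\ref{pro3.0} each such piece corresponds to a step in $Cay(G,S^{-1}S)$. Since $e\in S^{-1}S$ (as $S$ is nonempty), $Cay(G,S^{-1}S)$ contains loops, so a walk of length exactly $\ell$ from $e$ to $x$ in $Cay(G,S^{-1}S)$ always yields a walk of any length $\geq\ell$ of the same parity. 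Hence $\Theta$ is exactly the connected component of $e$ in $Cay(G,S^{-1}S)$, which is the subgroup $\langle S^{-1}S\rangle$ (a standard fact: in a Cayley graph the component of the identity is the subgroup generated by the connection set). This gives $\Theta=\langle S^{-1}S\rangle$.

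For the index bound, I would use connectedness of $GC(G,S,\alpha)$ together with the parity structure. The key point is that the equivalence classes under ``reachable by an even-length walk'' are obtained from the full vertex set of a connected graph by splitting along parity, so there are at most two such classes: $\Theta$ itself and $\Theta'$, the set of vertices reachable from $e$ by an odd-length walk. Indeed, if $v\in G$ is arbitrary, connectedness gives a walk from $e$ to $v$, and that walk has some parity, placing $v$ in $\Theta$ or $\Theta'$. Moreover $\Theta'$ is either empty or a single coset of $\Theta$: picking any $s\in S$, the vertex $\alpha(e)s=s$ is reachable from $e$ by a walk of length $1$, and for any $x\in\Theta$ one checks that there is an odd walk from $e$ to a translate of $x$, so $\Theta'$ is a coset of the subgroup $\Theta$. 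Therefore $|G|=|\Theta|$ or $|G|=2|\Theta|$, and in either case $|\langle S^{-1}S\rangle|=|\Theta|\geq\frac12|G|$.

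The step I expect to require the most care is verifying that $\Theta'$, the odd-reachable set, is genuinely a coset of $\Theta$ (rather than something larger), and in particular that the even/odd dichotomy really partitions all of $G$ into at most two classes. This needs the observation that if $x\in\Theta$ then $\alpha(x^{-1})$-type manipulations produce an odd walk from $e$ into the coset $s\Theta$ for a fixed $s\in S$, using condition (c) of Definition~\ref{def1.1} (equivalently $\alpha(S^{-1})=S$) to reverse and compose walk-steps cleanly; one also uses that $\Theta$ is closed under the relevant multiplications because it is the subgroup $\langle S^{-1}S\rangle$. Once the ``at most two classes'' claim is in hand, the bound $|\langle S^{-1}S\rangle|\ge\frac12|G|$ is immediate, and combined with the first paragraph the lemma follows.
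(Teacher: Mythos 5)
Your first half is fine and in fact more direct than the paper's treatment: applying Proposition~\ref{pro3.0} in both directions identifies $\Theta$ with the connected component of $e$ in $Cay(G,S^{-1}S)$, which equals $\langle S^{-1}S\rangle$ since $S^{-1}S$ contains $e$ and is closed under inversion. (The paper obtains $\Theta=\langle S^{-1}S\rangle$ more indirectly, via the cardinality bound plus a case analysis, so your route for this part is a legitimate simplification.)

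The genuine gap is in the cardinality bound. You reduce it to the claim that $\Theta'$, the odd-reachable set, is a coset of $\Theta$, but the only mechanism you describe --- exhibiting, for each $x\in\Theta$, an odd walk from $e$ to a translate of $x$ --- establishes an inclusion of the form $s\Theta\subseteq\Theta'$, i.e.\ $|\Theta'|\ge|\Theta|$, which is the wrong direction. To conclude $|\langle S^{-1}S\rangle|\ge\frac12|G|$ you need $|G\setminus\Theta|\le|\Theta|$, i.e.\ that $\Theta'$ is confined to a \emph{single} coset (or at least that $G\setminus\Theta$ injects into $\Theta$); you flag this as the delicate step but never argue it, and ``odd walks from $e$ into the coset $s\Theta$'' again only populate the coset rather than confine $\Theta'$ to it. Two quick repairs: (1) the paper's argument --- fix $s\in S$, take $z\in G\setminus\Theta$, append the edge $\{z,\alpha(z)s\}$ to an odd walk from $e$ to $z$ to get an even walk, so $\alpha(z)s\in\Theta$, and $z\mapsto\alpha(z)s$ is injective, giving $|G\setminus\Theta|\le|\Theta|$ directly; or (2) observe that any two odd-reachable vertices are joined by an even walk (reverse one odd walk and concatenate with the other), so $\Theta'$ lies in a single equivalence class, and by the same use of Proposition~\ref{pro3.0} every equivalence class is a left coset $x\langle S^{-1}S\rangle$ and hence has size $|\Theta|$. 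With either repair your outline closes; as written, the decisive inequality is unproved.
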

\begin{proof}
Firstly, we  show that $\Theta\subseteq \langle S^{-1}S\rangle$ and the size of $\Theta$ is not smaller than $\frac{1}{2}|G|$.
For any $y\in \Theta$, there exists a walk of even length from $e$ to $y$.
Then by Proposition \ref{pro3.0}, there exists a walk $(e, y_1, y_2, \cdots, y_m=y)$ from $e$ to $y$ in $Cay(G, S^{-1}S)$.
Observe that $y_1\in S^{-1}S$, then $y_2=y_1x_1\in \langle S^{-1}S\rangle$ for some $x_1\in S^{-1}S$.
Similarly, we obtain $y_i\in \langle S^{-1}S\rangle$ for each integer $2\leq i\leq m$.
Thus, $\Theta\subseteq \langle S^{-1}S\rangle$.
Since $GC(G, S,\alpha)$ is connected, for each element $z\in G\setminus \Theta$, let $(e, z_1, z_2, \cdots, z_{2t+1}=z)$ be a walk of odd length from $e$ to $z$.
Let $s\in S$.
Then $(e, z_1, z_2, \cdots, z_{2t+1}, \alpha(z)s)$ is a walk of even length from $e$ to $\alpha(z)s$, which implies $\alpha(z)s\in \Theta$.
It follows  that each element $z$ in $G\setminus \Theta$ corresponds to the element $\alpha(z)s$ in $\Theta$.
Moreover, for two distinct elements $z_1$ and $z_2$ in $G\setminus \Theta$, $\alpha(z_1)s\neq \alpha(z_2)s$.
Hence, we have the size of $\Theta$ is not smaller that $\frac{1}{2}|G|$.
As a consequence, only one of the two conditions $\Theta=\langle S^{-1}S\rangle$ and, $\Theta\subsetneq\langle S^{-1}S\rangle$ and $\langle S^{-1}S\rangle=G$ holds.
In fact, the latter case is impossible.
If $\langle S^{-1}S\rangle=G$, then $Cay(G, S^{-1}S)$ is connected.
Thus, for any vertex $x\in G$, there is a path from $x$ to $e$ in $Cay(G, S^{-1}S)$.
By Proposition \ref{pro3.0},  there is an even walk from  $x$ to $e$ in $GC(G, S, \alpha)$, which yields that  $\Theta=G=\langle S^{-1}S\rangle$.
Therefore, the desired result follows.
\end{proof}

To prove Theorem \ref{thm3.3}, the next two lemmas will also be needed.

\begin{lemma}\label{lem3.1}
Let $GC(G, S, \alpha)$ be a generalized Cayley graph. It holds that
$\alpha(\langle S^{-1}S\rangle)=\langle SS^{-1}\rangle$.
\end{lemma}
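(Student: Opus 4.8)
The goal is to show $\alpha(\langle S^{-1}S\rangle)=\langle SS^{-1}\rangle$ for a generalized Cayley graph $GC(G,S,\alpha)$. The plan is to exploit the defining identity $\alpha(S^{-1})=S$, which was already extracted from conditions (b) and (c) of Definition~\ref{def1.1}. Since $\alpha$ is an automorphism, it commutes with taking inverses and with forming products of subsets, so $\alpha(S^{-1}S)=\alpha(S^{-1})\alpha(S)$. First I would rewrite $\alpha(S)$: from $\alpha(S^{-1})=S$ we get $\alpha(S)=\alpha((S^{-1})^{-1})=(\alpha(S^{-1}))^{-1}=S^{-1}$, using $\alpha^2=\id$. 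Hence $\alpha(S^{-1})\alpha(S)=S\cdot S^{-1}=SS^{-1}$, giving $\alpha(S^{-1}S)=SS^{-1}$ as sets.

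Next I would promote this set-level identity to the generated subgroups. In general, for an automorphism $\varphi$ of a group and any subset $T$, one has $\varphi(\langle T\rangle)=\langle \varphi(T)\rangle$: the image $\varphi(\langle T\rangle)$ is a subgroup containing $\varphi(T)$, hence contains $\langle\varphi(T)\rangle$; conversely $\varphi^{-1}(\langle\varphi(T)\rangle)$ is a subgroup containing $T$, hence contains $\langle T\rangle$, and applying $\varphi$ gives the reverse inclusion. Applying this with $\varphi=\alpha$ and $T=S^{-1}S$ yields $\alpha(\langle S^{-1}S\rangle)=\langle\alpha(S^{-1}S)\rangle=\langle SS^{-1}\rangle$, which is exactly the claim.

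There is no serious obstacle here; the only point requiring a little care is the passage from the set identity to the subgroup identity, i.e. justifying $\alpha(\langle S^{-1}S\rangle)=\langle\alpha(S^{-1}S)\rangle$. One should also note that $e\in S^{-1}S\cap SS^{-1}$, so both sides genuinely are subgroups (not merely generated by subsets not containing the identity), though this is not strictly needed for the argument. Alternatively, one can avoid invoking the general lemma and argue directly: every element of $\langle S^{-1}S\rangle$ is a finite product $\prod_i (a_i^{-1}b_i)^{\pm 1}$ with $a_i,b_i\in S$, and $\alpha$ of such a product is $\prod_i \alpha(a_i^{-1}b_i)^{\pm 1}=\prod_i\bigl(\alpha(a_i)^{-1}\alpha(b_i)\bigr)^{\pm1}$; since $\alpha(a_i),\alpha(b_i)\in S^{-1}$, each factor $\alpha(a_i)^{-1}\alpha(b_i)$ lies in $SS^{-1}$, so the product lies in $\langle SS^{-1}\rangle$, giving $\alpha(\langle S^{-1}S\rangle)\subseteq\langle SS^{-1}\rangle$; the reverse inclusion follows symmetrically (or by applying $\alpha$ again and using $\alpha^2=\id$). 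Either route finishes the proof in a few lines.
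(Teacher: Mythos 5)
Your proposal is correct and follows essentially the same route as the paper: both rest on the set identity $\alpha(S^{-1}S)=\alpha(S^{-1})\alpha(S)=SS^{-1}$ and then transfer it to the generated subgroups, with $\alpha^2=\id$ handling the reverse inclusion. Your explicit appeal to $\alpha(\langle T\rangle)=\langle\alpha(T)\rangle$ is just a cleaner packaging of the paper's symmetric two-inclusion argument.
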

\begin{proof}
Note that $(S^{-1}S)^{-1}=S^{-1}S$, hence each element of $\langle S^{-1}S\rangle$ is the product of some elements in $S^{-1}S$.
Since $\alpha(S^{-1}S)=\alpha(S^{-1})\alpha(S)=SS^{-1}$, we have $\alpha(\langle S^{-1}S\rangle)\leq\langle SS^{-1}\rangle$.
Similarly, for $SS^{-1}$, we have $\alpha(\langle SS^{-1}\rangle)\leq\langle S^{-1}S\rangle$.
Since $\alpha$ is an involutory automorphism of $G$, it follows that $\langle SS^{-1}\rangle\leq\alpha(\langle S^{-1}S\rangle)$.
Thus, $\alpha(\langle S^{-1}S\rangle)=\langle SS^{-1}\rangle$.
\end{proof}

\begin{lemma}\label{lem3.2}
Let $GC(G, S, \alpha)$ be a generalized Cayley graph.
Let $C(e)$ be the vertex set  of the connected component containing $e$ in $GC(G, S, \alpha)$.
Then  $C(e)=\langle S^{-1}S\rangle\cup \langle SS^{-1}\rangle s$ for any $s\in S$.
\end{lemma}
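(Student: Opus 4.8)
The plan is to identify $C(e)$ with the set of all endpoints of walks in $GC(G,S,\alpha)$ starting at $e$, and then to sort these walks by the parity of their length. For the even case I would use Proposition \ref{pro3.0}: a walk of length $2k$ from $e$ to a vertex $v$ in $GC(G,S,\alpha)$ splits into $k$ consecutive blocks of length $2$, each block being (by Proposition \ref{pro3.0}) an edge of $Cay(G,S^{-1}S)$ between the corresponding even-indexed vertices, so it yields a walk of length $k$ from $e$ to $v$ in $Cay(G,S^{-1}S)$; conversely every edge of $Cay(G,S^{-1}S)$ lifts to a length-$2$ walk of $GC(G,S,\alpha)$ with the same endpoints, and such lifts concatenate. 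Hence the vertices reachable from $e$ by an even walk in $GC(G,S,\alpha)$ are exactly the vertices reachable from $e$ in $Cay(G,S^{-1}S)$; since $S^{-1}S$ is symmetric and contains $e$, that set is the subgroup $\langle S^{-1}S\rangle$.

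For the odd case I would first observe that the neighbours of $e$ in $GC(G,S,\alpha)$ are precisely the elements of $S$, because $\{e,h\}\in E(GC(G,S,\alpha))$ iff $\alpha(e^{-1})h=h\in S$. Thus a walk of odd length from $e$ to $v$ is a first edge $e\to s_0$ with $s_0\in S$ followed by an even walk from $s_0$ to $v$; applying the same blocking argument based at $s_0$, the even walks out of $s_0$ reach exactly the connected component of $s_0$ in $Cay(G,S^{-1}S)$, namely the left coset $s_0\langle S^{-1}S\rangle$. So the vertices reachable from $e$ by an odd walk form $\bigcup_{s_0\in S}s_0\langle S^{-1}S\rangle$, and since $s^{-1}s_0\in S^{-1}S\subseteq\langle S^{-1}S\rangle$ for all $s,s_0\in S$, this union equals $s\langle S^{-1}S\rangle$ for any single fixed $s\in S$. (If $S=\emptyset$ there are no walks of positive length, $C(e)=\{e\}=\langle S^{-1}S\rangle$, and the statement is vacuous, so one may assume $S\neq\emptyset$.)

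It then remains to match $s\langle S^{-1}S\rangle$ with $\langle SS^{-1}\rangle s$, for which it suffices to show $s\langle S^{-1}S\rangle s^{-1}=\langle SS^{-1}\rangle$. This I would get from the elementary inclusions $s(S^{-1}S)s^{-1}=(sS^{-1})(Ss^{-1})\subseteq(SS^{-1})(SS^{-1})\subseteq\langle SS^{-1}\rangle$, using $sS^{-1}\subseteq SS^{-1}$ and $Ss^{-1}\subseteq SS^{-1}$, together with the symmetric inclusion $s^{-1}(SS^{-1})s=(s^{-1}S)(S^{-1}s)\subseteq\langle S^{-1}S\rangle$; conjugating generating sets gives $s\langle S^{-1}S\rangle s^{-1}\subseteq\langle SS^{-1}\rangle$ and $\langle SS^{-1}\rangle\subseteq s\langle S^{-1}S\rangle s^{-1}$, hence equality. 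Combining the two parities yields $C(e)=\langle S^{-1}S\rangle\cup s\langle S^{-1}S\rangle=\langle S^{-1}S\rangle\cup\langle SS^{-1}\rangle s$ for every $s\in S$. The only place that needs real care — the main (mild) obstacle — is the bookkeeping of the parity argument: verifying that the correspondence between even walks in $GC(G,S,\alpha)$ and walks in $Cay(G,S^{-1}S)$ is valid in both directions and compatible with concatenation, and that the first-edge decomposition in the odd case actually deposits one in a genuine coset of $\langle S^{-1}S\rangle$. (Alternatively one can bypass Proposition \ref{pro3.0}: a walk of length $\ell$ from $e$ ends at the alternating product $\alpha(s_0)s_1\alpha(s_2)s_3\cdots$ with all $s_i\in S$, and one reads off membership in $\langle S^{-1}S\rangle$ or in $\langle SS^{-1}\rangle s$ by grouping the factors in pairs, using $\alpha(S)=S^{-1}$; Lemma \ref{lem3.1} then serves as a consistency check since $\alpha(\langle S^{-1}S\rangle)=\langle SS^{-1}\rangle$.)
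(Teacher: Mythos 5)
Your proof is correct, but it takes a genuinely different route from the paper's. The paper proves the inclusion $C(e)\subseteq \langle S^{-1}S\rangle\cup\langle SS^{-1}\rangle s$ by induction along a path from $e$, showing the vertices alternate between $\langle SS^{-1}\rangle s$ (odd positions) and $\langle S^{-1}S\rangle$ (even positions) via $\alpha(S)=S^{-1}$ and Lemma \ref{lem3.1}, and then proves the reverse inclusion by writing down explicit walks $W_1$, $W_2$ reaching any prescribed element of $\langle S^{-1}S\rangle$ or of $\langle SS^{-1}\rangle s$. You instead funnel everything through Proposition \ref{pro3.0}: the two-way correspondence between even walks in $GC(G,S,\alpha)$ and walks in $Cay(G,S^{-1}S)$ gives both inclusions at once for the even part (reachability in $Cay(G,S^{-1}S)$ from $e$ is exactly $\langle S^{-1}S\rangle$, and from $s_0$ is exactly $s_0\langle S^{-1}S\rangle$), the odd part is an initial edge into $S$ followed by an even walk, and the final identification $s\langle S^{-1}S\rangle=\langle SS^{-1}\rangle s$ is obtained from the purely group-theoretic conjugation identity $s\langle S^{-1}S\rangle s^{-1}=\langle SS^{-1}\rangle$, so Lemma \ref{lem3.1} is never needed (you only invoke it as a sanity check). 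What your approach buys: no explicit walk constructions, a clean parity dichotomy stated in terms of walk length rather than path position, and the observation that the coset matching has nothing to do with $\alpha$; what the paper's buys: it stays self-contained at the level of this lemma (its proof of Lemma \ref{lem3.2} does not rely on Proposition \ref{pro3.0}) and recycles Lemma \ref{lem3.1}, which is also used later in Theorem \ref{thm3.3}. One small point of care in your version, consistent with the paper's conventions: when a length-$2$ block of a walk returns to its starting vertex, the corresponding "edge" of $Cay(G,S^{-1}S)$ is a loop, which is legitimate here precisely because $e\in S^{-1}S$ and the paper explicitly allows loops in $Cay(G,S^{-1}S)$; reachability is unaffected, so your blocking argument goes through.
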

\begin{proof}
 Notice that for any $s\in S$, $t=ts^{-1}s\in \langle SS^{-1}\rangle s$ for every $t\in S$.
It follows that  $S\subseteq \langle SS^{-1}\rangle s$ and $\langle SS^{-1}\rangle t= \langle SS^{-1}\rangle s$.

On the one hand, for each vertex $x\in C(e)$, there exists a path $P=(e,x_1,x_2,\cdots,x_n=x)$ from $e$ to $x$.
We claim that either $x_i\in \langle S^{-1}S\rangle$ or $x_i\in\langle SS^{-1}\rangle s$ for $1\leq i\leq n$.
It is easy to see that $x_1=\alpha(e)s_1=s_1\in \langle SS^{-1}\rangle s$ and then $x_2=\alpha(x_1)s_2\in \alpha(\langle SS^{-1}\rangle)\alpha(s)s_2$ for some $s_1,s_2\in S$.
By Lemma \ref{lem3.1} and $\alpha(S)=S^{-1}$, we have $x_2\in \langle S^{-1}S\rangle$.
By induction, suppose that for each integer $t$ smaller than $k$, where $k\leq n$,  $x_t\in \langle SS^{-1}\rangle s$  when $t$ is odd and $x_t\in\langle S^{-1}S\rangle$ when $t$ is even.
Then from $x_{k-1}\in \langle SS^{-1}\rangle s\cup\langle S^{-1}S\rangle$ and $x_k=\alpha(x_{k-1})s_k$ for some $s_k\in S$, it follows that $x_k\in \langle S^{-1}S\rangle\cup \langle SS^{-1}\rangle s$.
Thus, $C(e)\subseteq \langle S^{-1}S\rangle\cup \langle SS^{-1}\rangle s$.

On the other hand, we claim that  $\langle S^{-1}S\rangle\cup \langle SS^{-1}\rangle s\subseteq C(e)$.
For any $y\in \langle S^{-1}S\rangle$, assume that $y=\prod_{i=1}^{m}s_i^{-1}t_i$, where $s_i,t_i\in S$ for $1\leq i\leq m$.
Since $\alpha(S)=S^{-1}$, there exists $u_i\in S$ such that $\alpha(u_i)=s_i^{-1}$ for each  $1\leq i\leq m$.
Then \begin{equation*}
W_1=\left(e,u_1,s_1^{-1}t_1,u_1\alpha(t_1)u_2,\prod_{i=1}^2s_i^{-1}t_i,\cdots,
\left(\prod_{i=1}^{m-1}u_i\alpha(t_i)\right)u_m,\prod_{i=1}^{m}s_i^{-1}t_i=
y\right)
\end{equation*}
is a walk from $e$ to $y$.
For any $y\in \langle SS^{-1}\rangle s$, assume $y=(\prod_{j=1}^{n}s_jt_j^{-1})s$, where $s_j,t_j\in S$ for $1\leq j\leq n$.
For each $1\leq j\leq n$, let $v_j=\alpha(t_j^{-1})\in S.$
Then
\begin{equation*}
W_2=\left(e,s_1,\alpha(s_1)v_1,s_1t_1^{-1}s_2,
\prod_{j=1}^{2}\alpha(s_j)v_j,\cdots,
\prod_{j=1}^{n}\alpha(s_j)v_j, \left(\prod_{j=1}^{n}s_jt_j^{-1}\right)s=y\right)
\end{equation*}
is a walk from $e$ to $y$, which deduces that $\langle SS^{-1}\rangle s\subseteq C(e).$

Therefore,   $C(e)=\langle S^{-1}S\rangle\cup \langle SS^{-1}\rangle s$ follows.
\end{proof}

Combining with Lemmas~\ref{lem3.1} and~\ref{lem3.2}, we deduce the following result.

\begin{theorem}\label{thm3.3}
Let $GC(G, S, \alpha)$ be a generalized Cayley graph. Then
$GC(G,S,\alpha)$ is connected if and only if one of the following conditions holds:
\begin{enumerate}[{\rm(i)}]

\item $S$ is not contained in a right coset of any proper subgroup of $G$;

\item $S$ is contained in a nontrivial right coset of  a proper subgroup $H$ of $G$, where $H=\langle SS^{-1}\rangle$  and it  satisfies $|G:H|=2$ and $\alpha(H)=H$.

\end{enumerate}
\end{theorem}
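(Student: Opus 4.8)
The plan is to deduce the theorem directly from Lemma~\ref{lem3.2}. Since $C(e)$ is the vertex set of the component of $e$, the graph $GC(G,S,\alpha)$ is connected if and only if $C(e)=G$, and by Lemma~\ref{lem3.2} this means $\langle S^{-1}S\rangle\cup\langle SS^{-1}\rangle s=G$ for a fixed $s\in S$. Write $H_1=\langle S^{-1}S\rangle$ and $H_2=\langle SS^{-1}\rangle$. I would begin by collecting a few elementary facts: using the identities $a^{-1}b=(s^{-1}a)^{-1}(s^{-1}b)$ and $ab^{-1}=(as^{-1})(bs^{-1})^{-1}$ for $a,b\in S$ one gets $H_1=\langle s^{-1}S\rangle$ and $H_2=\langle Ss^{-1}\rangle$; since $s(s^{-1}S)s^{-1}=Ss^{-1}$ this yields $sH_1s^{-1}=H_2$, and in particular $|H_1|=|H_2|$.

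Next I would rewrite conditions (i) and (ii) purely in terms of $H_2$. If $S\subseteq Kg$ for a proper subgroup $K$, then picking $s\in S$ forces $Kg=Ks$ and hence $Ss^{-1}\subseteq K$, so $H_2=\langle Ss^{-1}\rangle\leq K\ne G$; conversely if $H_2\ne G$ then $S\subseteq H_2s$ exhibits $S$ inside a right coset of the proper subgroup $H_2$. Thus (i) fails exactly when $H_2\ne G$, i.e.\ (i) $\iff H_2=G$. For (ii): $S\subseteq H_2s$ always, and $H_2s$ is the unique coset of $H_2$ meeting $S$, so ``$S$ lies in a nontrivial right coset of $H_2$'' says precisely $s\notin H_2$; moreover, when $|G:H_2|=2$ the subgroup $H_2$ is normal, so $H_1=sH_1s^{-1}=H_2$ and Lemma~\ref{lem3.1} gives $\alpha(H_2)=\alpha(H_1)=H_2$ automatically. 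Hence (ii) $\iff$ ($|G:H_2|=2$ and $s\notin H_2$).

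Then I would run the case analysis on the common order $|H_1|=|H_2|$. If $H_1=G$, then $H_2=G$, $C(e)=G$, the graph is connected, and (i) holds. If $|G:H_1|\geq 3$, then $|C(e)|\leq|H_1|+|H_2|=2|H_1|\leq\tfrac{2}{3}|G|<|G|$, the graph is disconnected, and both (i) (since $H_2\ne G$) and (ii) (since $|G:H_2|\ne 2$) fail. If $|G:H_1|=2$, then $H_1$ is normal, $H_1=H_2=:H$ is an $\alpha$-invariant subgroup of index $2$, and $C(e)=H\cup Hs$; this is all of $G$ if and only if $s\notin H$, which is exactly condition (ii), whereas if $s\in H$ then $C(e)=H\ne G$ and neither (i) nor (ii) holds. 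The case $|G|/2<|H_1|<|G|$ is impossible. Putting the three cases together, $GC(G,S,\alpha)$ is connected if and only if (i) or (ii) holds.

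The part that I expect to require the most care is the middle step: matching the deliberately asymmetric hypotheses of alternative (ii) --- that the relevant subgroup is exactly $\langle SS^{-1}\rangle$, that the coset be \emph{nontrivial}, and the (ultimately redundant) demand $\alpha(H)=H$ --- against the symmetric facts $|H_1|=|H_2|$, $sH_1s^{-1}=H_2$, and $\alpha(H_1)=H_2$ from Lemma~\ref{lem3.1}. The component description in Lemma~\ref{lem3.2} does the actual combinatorial work, so once the translation is in place the case analysis is short.
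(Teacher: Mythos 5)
Your proof is correct, and it reaches the theorem by a somewhat different route than the paper. Both arguments rest on the component description of Lemma~\ref{lem3.2}, $C(e)=\langle S^{-1}S\rangle\cup\langle SS^{-1}\rangle s$, but the paper establishes necessity through Lemma~\ref{lem3.0} (the even-walk class $\Theta$ and the bound $|\langle S^{-1}S\rangle|\geq\frac{1}{2}|G|$) together with a ``smallest subgroup whose right coset contains $S$'' argument, and then verifies sufficiency separately from Lemmas~\ref{lem3.1} and~\ref{lem3.2}. You bypass Lemma~\ref{lem3.0} entirely: the elementary identities giving $\langle S^{-1}S\rangle=\langle s^{-1}S\rangle$ and $\langle SS^{-1}\rangle=\langle Ss^{-1}\rangle$ yield $s\langle S^{-1}S\rangle s^{-1}=\langle SS^{-1}\rangle$, hence equal orders and coincidence of the two subgroups whenever either is normal (in particular when the index is $2$); after translating (i) into $\langle SS^{-1}\rangle=G$ and (ii) into ``$|G:\langle SS^{-1}\rangle|=2$ and $s\notin\langle SS^{-1}\rangle$'', a single case analysis on the index of $\langle S^{-1}S\rangle$ settles both implications at once. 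What this buys is a more self-contained argument (no parity equivalence relation needed), a cleaner size bound $|C(e)|\leq 2|\langle S^{-1}S\rangle|$ replacing Lemma~\ref{lem3.0}, and an explicit observation the paper proves only implicitly in its necessity step: the hypothesis $\alpha(H)=H$ in (ii) is automatic once $|G:H|=2$, via Lemma~\ref{lem3.1} and normality. The paper's version, in exchange, records the intermediate fact about $\Theta$, which it reuses later (e.g.\ in the bipartiteness theorem).
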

\begin{proof}
Suppose $GC(G,S,\alpha)$ is connected and let $H$ be the smallest subgroup of $G$ such that $S$ is contained in some right coset of $H$.
Assume $S\subseteq Hg$ for some $g\in G$.
Then  $S^{-1}\subseteq g^{-1}H$, which yields that  $SS^{-1}\subseteq Hgg^{-1}H=H$.
Thus, $\langle SS^{-1}\rangle\leq H$.
Since for any $s\in S$, we have $S\subseteq \langle SS^{-1}\rangle s$, which implies that $S$ is also contained in a right coset of $\langle SS^{-1}\rangle$.
By the choice of $H$,  $\langle SS^{-1}\rangle=H$ follows.
From Lemma \ref{lem3.0}, either $H=G$ or  $|H|=\frac{1}{2}|G|$.
For the latter case, $H$ is a normal subgroup of $G$.
It follows that $S^{-1}S\subseteq g^{-1}HHg=H$ and then $\langle S^{-1}S\rangle\leq H$.
By Lemma \ref{lem3.1}, we obtain $\alpha(H)=\alpha(\langle SS^{-1}\rangle)=\langle S^{-1}S\rangle\leq H$.
Since the order of $\alpha(H)$ and $H$ are  equal, we have $\alpha(H)= H$.
In addition, we assert that $g\notin H$.
Otherwise, assume $g\in H$, then $S\subseteq H$.
From  Lemma \ref{lem3.2}, we obtain the connected component $C(e)$ containing $e$ in $GC(G,S,\alpha)$ satisfies $C(e)\leq\langle S\rangle\leq H$, which contradicts that $GC(G,S,\alpha)$ is connected.

Conversely, we show that both the conditions \rm(i) and \rm(ii)  imply $GC(G,S,\alpha)$ is connected.
Let $s\in S$.
If $S$ is not contained in a  right coset of any proper subgroup of $G$,
then we have $G=\langle SS^{-1}\rangle$ since $S\subseteq \langle SS^{-1}\rangle s$.
Thus, by Lemma \ref{lem3.2}, $C(e)=G$, which implies that  $GC(G,S,\alpha)$ is connected.
Let $H=\langle SS^{-1}\rangle<G$.
If $S$ is contained in $Hg$ for some $g\in G\setminus H$, where $\alpha(H)=H$ and $|G:H|=2$, then $C(e)=\alpha(H)\cup Hs=H\cup Hs=H\cup Hg=G$.
Thus, $GC(G,S,\alpha)$ is  connected.
\end{proof}

By Theorem \ref{thm3.3}, we obtain  another equivalent condition for generalized Cayley graphs to be connected,
which is more convenient to use in general.

\begin{theorem}\label{thm3.4}
Let $GC(G, S, \alpha)$ be a generalized Cayley graph. Then
$GC(G,S,\alpha)$ is connected if and only if $G=\langle S\rangle$, $|G:\langle SS^{-1}\rangle|\leq 2$ and $\alpha(\langle SS^{-1}\rangle)=\langle SS^{-1}\rangle$.
\end{theorem}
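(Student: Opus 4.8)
The plan is to deduce Theorem~\ref{thm3.4} from Theorem~\ref{thm3.3} by showing that the single condition ``$G=\langle S\rangle$, $|G:\langle SS^{-1}\rangle|\le 2$ and $\alpha(\langle SS^{-1}\rangle)=\langle SS^{-1}\rangle$'' is equivalent to the disjunction of conditions (i) and (ii) there. Write $H=\langle SS^{-1}\rangle$. Two elementary facts will do all the work: first, for every $s\in S$ one has $S\subseteq Hs$, so $S$ always lies in some right coset of $H$; second, if $S\subseteq Kg$ for a subgroup $K$ and some $g\in G$, then $SS^{-1}\subseteq Kgg^{-1}K=K$, hence $H\le K$. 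In particular $H\le\langle S\rangle$ always, and $H$ is the smallest subgroup of $G$ possessing a right coset containing $S$.

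First I would treat the forward direction. Assuming $GC(G,S,\alpha)$ is connected, Theorem~\ref{thm3.3} gives that (i) or (ii) holds. If (i) holds, then since $S\subseteq Hs$ the subgroup $H$ cannot be proper, so $H=G$; hence $|G:H|=1\le 2$, $\alpha(H)=H$ trivially, and $\langle S\rangle\ge H=G$, so all three conditions hold. If (ii) holds, then $H$ has index $2$, $\alpha(H)=H$, and $S$ lies in a coset $Hg$ with $g\notin H$; thus no element of $S$ lies in $H$, so $\langle S\rangle\supsetneq H$, and as there is no subgroup strictly between $H$ and $G$ we get $\langle S\rangle=G$. Either way the claimed conditions follow.

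Next I would do the converse. Suppose $G=\langle S\rangle$, $|G:H|\le 2$ and $\alpha(H)=H$. If $H=G$, I would verify condition (i): were $S$ contained in a right coset of a proper subgroup $K$, the second fact above would give $G=H\le K<G$, a contradiction. If $|G:H|=2$, then $H$ is proper; picking $s\in S$ we have $S\subseteq Hs$, and $s\notin H$ since otherwise $S\subseteq H$ would force $G=\langle S\rangle\le H$. Together with $\alpha(H)=H$ this is precisely condition (ii). In both cases Theorem~\ref{thm3.3} shows $GC(G,S,\alpha)$ is connected.

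I do not expect a genuine obstacle here: everything reduces to the two facts about $H=\langle SS^{-1}\rangle$ and a short case split on $|G:H|\in\{1,2\}$, with the index-$2$ case being the only place one must be slightly careful to check that $S$ sits in a \emph{nontrivial} coset of $H$ and that $\langle S\rangle$ is forced to be all of $G$.
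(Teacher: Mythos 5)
Your proof is correct and takes essentially the same route as the paper: both directions are reduced to Theorem~\ref{thm3.3} via the two facts $S\subseteq\langle SS^{-1}\rangle s$ and $S\subseteq Kg\Rightarrow\langle SS^{-1}\rangle\le K$, with a case split on $|G:\langle SS^{-1}\rangle|\in\{1,2\}$. The only cosmetic difference is that the paper obtains $G=\langle S\rangle$ in the necessity from Lemma~\ref{lem3.2} (the description of the component $C(e)$), whereas you deduce it directly from the case analysis of Theorem~\ref{thm3.3}; both arguments are sound.
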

\begin{proof}
Necessity.
Assume $GC(G,S,\alpha)$ is connected.
Since the connected component $C(e)$ containing $e$ in $GC(G,S,\alpha)$ satisfies $C(e)=\langle S^{-1}S\rangle\cup \langle SS^{-1}\rangle s\subseteq \langle S\rangle$ and $C(e)=G$ for any $s\in S$, we have $G=\langle S\rangle$.
Note that $S\subseteq \langle SS^{-1}\rangle s$,
by Theorem \ref{thm3.3},
$\langle SS^{-1}\rangle=G$ or  $\alpha(\langle SS^{-1}\rangle)=\langle SS^{-1}\rangle$ and $|G:\langle SS^{-1}\rangle|=2$.

Sufficiency.
We split the proof into two cases:
\vspace{2mm}

\textbf{Case 1:} $G=\langle S\rangle=\langle SS^{-1}\rangle=\langle S^{-1}S\rangle$.

\vspace{2mm}
In this case, we claim that there is no proper subgroup of $G$ such that $S$ is contained in a right coset of this subgroup.
If the claim is not true, let $H<G$ and $S\subseteq Hg$ for some $g\in G$.
Then $SS^{-1}\subseteq Hgg^{-1}H=H$.
Further, $G=\langle SS^{-1}\rangle \leq H$, which is a contradiction.
Thus, by  Theorem \ref{thm3.3} (i), the result follows.

\vspace{2mm}

\textbf{Case 2:} $G=\langle S\rangle$, $|G:\langle SS^{-1}\rangle|=2$ and  $\alpha(\langle SS^{-1}\rangle)=\langle SS^{-1}\rangle$.

\vspace{2mm}
If there exists a proper subgroup $H$ of $G$ such that $S\subseteq Hg$ for some $g\in G$,
then we obtain $\langle SS^{-1}\rangle\leq H$.
Since  $|G:\langle SS^{-1}\rangle|=2$, it follows that $H=\langle SS^{-1}\rangle$.
Clearly, $g\notin H$.
Otherwise, $S\subseteq H$ and then $\langle S\rangle=G\leq H$, which is a contradiction.
Using Theorem \ref{thm3.3} (ii), we conclude that $GC(G,S,\alpha)$ is connected.
\end{proof}

According to Theorem \ref{thm3.4}, we give the equivalent conditions of the connected generalized Cayley graphs to be  bipartite.

\begin{theorem}
Let $GC(G, S, \alpha)$ be a generalized Cayley graph.
If $GC(G,S,\alpha)$ is connected, then the following statements are equivalent:
\begin{enumerate}[{\rm(i)}]
\item $GC(G,S,\alpha)$ is bipartite;

\item $|\langle SS^{-1}\rangle|=\frac{1}{2}|G|$;

\item $S\cap \langle SS^{-1}\rangle=\emptyset$.
\end{enumerate}
\end{theorem}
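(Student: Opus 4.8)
The plan is to fix $H=\langle SS^{-1}\rangle$ and read off the structure of a connected $GC(G,S,\alpha)$ from Theorem~\ref{thm3.4}: namely $G=\langle S\rangle$, $|G:H|\in\{1,2\}$ and $\alpha(H)=H$. Applying $\alpha$ to the identity $\alpha(\langle S^{-1}S\rangle)=\langle SS^{-1}\rangle$ of Lemma~\ref{lem3.1} and using $\alpha(H)=H$ gives $\langle S^{-1}S\rangle=\alpha(H)=H$ as well, so Lemma~\ref{lem3.2} reads $G=C(e)=H\cup Hs$ for every $s\in S$. I would also record the elementary fact (already used inside the proof of Lemma~\ref{lem3.2}) that $S\subseteq Hs$ for every $s\in S$, and note that condition~(b) of Definition~\ref{def1.1} forces $e\notin S$; we may moreover assume $S\neq\emptyset$, since otherwise $GC(G,S,\alpha)$ is edgeless and connectivity forces $|G|=1$. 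With these preliminaries in hand, I would prove the cycle of implications $(i)\Rightarrow(ii)\Rightarrow(iii)\Rightarrow(i)$.

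For $(i)\Rightarrow(ii)$ I argue by contraposition. If $(ii)$ fails, then $|G:H|\le 2$ forces $H=G$, hence $\langle S^{-1}S\rangle=H=G$ and $Cay(G,S^{-1}S)$ is connected. Fix $s\in S$; then $\{e,s\}$ is an edge of $GC(G,S,\alpha)$ (and $s\neq e$ since $e\notin S$), while any walk from $e$ to $s$ in $Cay(G,S^{-1}S)$ lifts, through Proposition~\ref{pro3.0}, to an even-length walk from $e$ to $s$ in $GC(G,S,\alpha)$. Concatenating it with the edge $\{s,e\}$ produces an odd closed walk, so by Lemma~\ref{lem1.3} the graph contains an odd cycle and $(i)$ fails.

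For $(ii)\Rightarrow(iii)$: if $|H|=\frac{1}{2}|G|$, then $|G:H|=2$ and $H\neq G$, so $G=H\cup Hs$ forces $Hs\neq H$, i.e.\ $s\notin H$; combined with $S\subseteq Hs$ this yields $S\cap H=\emptyset$. For $(iii)\Rightarrow(i)$: from $S\cap H=\emptyset$ and $S\neq\emptyset$ we get $H\neq G$, hence $|G:H|=2$, $H\trianglelefteq G$, and $G=H\cup Hg$ with $g\notin H$; picking $s\in S$ we have $s\notin H$, so $S\subseteq Hs=Hg$. It then remains to verify that the partition $\{H,Hg\}$ of $V=G$ is a proper $2$-colouring: for an edge $\{x,y\}$ one has $y\in\alpha(x)S\subseteq\alpha(x)Hg$, and using $\alpha(H)=H$, the fact that $\alpha(g)\in Hg$ (since $\alpha(g)\in H$ would give $g=\alpha(\alpha(g))\in H$), and $Hg\cdot Hg=Hg^2=H$, one checks that $x\in H$ forces $y\in Hg$ and $x\in Hg$ forces $y\in H$. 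Hence $GC(G,S,\alpha)$ is bipartite.

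I expect $(iii)\Rightarrow(i)$ to be the main obstacle: one must first upgrade $S\cap H=\emptyset$ to the statement that $S$ lies entirely in the \emph{nontrivial} coset $Hg$, and then push the coset bookkeeping through the automorphism $\alpha$ to confirm that every edge genuinely crosses the bipartition. By contrast, $(i)\Rightarrow(ii)$ and $(ii)\Rightarrow(iii)$ are short deductions from Theorem~\ref{thm3.4}, Proposition~\ref{pro3.0} and Lemmas~\ref{lem1.3},~\ref{lem3.1} and~\ref{lem3.2}.
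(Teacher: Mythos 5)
Your proof is correct and follows essentially the same route as the paper: it extracts $G=\langle S\rangle$, $|G:H|\le 2$ and $\alpha(H)=H$ from Theorem~\ref{thm3.4}, handles (i)$\Rightarrow$(ii) via even walks coming from Proposition~\ref{pro3.0}, and exhibits the same bipartition $\{H,Hs\}$ for (iii)$\Rightarrow$(i). The only differences are cosmetic: you prove (i)$\Rightarrow$(ii) by contraposition (building an odd closed walk and invoking Lemma~\ref{lem1.3}) instead of the paper's direct parity argument, and you verify both cases $x\in H$ and $x\in Hg$ explicitly where the paper writes ``without loss of generality,'' which is if anything slightly more complete.
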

\begin{proof}
As $GC(G,S,\alpha)$ is connected, by Theorem \ref{thm3.4}, we have
$\langle S\rangle=G$,  $|G:\langle SS^{-1}\rangle|\leq 2$ and $\alpha(\langle SS^{-1}\rangle)=\langle S^{-1}S\rangle=\langle SS^{-1}\rangle$.
Next, we show that the conditions $\rm(i)$, $\rm(ii)$ and $\rm(iii)$ are equivalent.

$\rm(i)\Rightarrow (ii):$ Since $GC(G,S,\alpha)$ is connected, by Lemma \ref{lem3.0}, for each element $x\in \langle SS^{-1}\rangle$, there exists an even walk from $e$ to $x$.
Combining  $GC(G,S,\alpha)$ is bipartite, it follows that $\langle SS^{-1}\rangle$ is contained in  the same  partition set of $GC(G,S,\alpha)$.
Thus, $\langle SS^{-1}\rangle\neq G$.

$\rm(ii)\Rightarrow (iii):$ Suppose that $|\langle SS^{-1}\rangle|=\frac{1}{2}|G|$ but $S\cap \langle SS^{-1}\rangle\neq\emptyset$.
Let $a\in S\cap \langle SS^{-1}\rangle$.
Then, for any $b\in S$, $b=(ba^{-1})a\in \langle SS^{-1}\rangle$, which yields that $S\subseteq \langle SS^{-1}\rangle$.
As a result, $\langle S\rangle\neq G$, which leads to a contradiction.

$\rm(iii)\Rightarrow (i):$ If $S\cap \langle SS^{-1}\rangle=\emptyset$, then $\langle SS^{-1}\rangle\neq G$, which implies $|\langle SS^{-1}\rangle|=\frac{1}{2}|G|$.
For any edge $\{x,y\}$ in $GC(G,S,\alpha)$, we have $y=\alpha(x)s$ for some $s\in S$.
Observe that $s\not\in \langle SS^{-1}\rangle$, thus $G=\langle SS^{-1}\rangle \cup \langle SS^{-1}\rangle s$.
Without loss of generality, assume that $x\in \langle SS^{-1}\rangle$.
Then $y=\alpha(x)s\in \alpha(\langle SS^{-1}\rangle) s=\langle SS^{-1}\rangle s$.
Thus, $GC(G,S,\alpha)$ is bipartite with bipartition $\langle SS^{-1}\rangle$ and $\langle SS^{-1}\rangle s$.
\end{proof}
{\bf Remark:}
If $G$ is an abelian group, then  $\alpha(\langle SS^{-1}\rangle)=\langle SS^{-1}\rangle$ always holds.
Thus, the generalized Cayley graph of an abelian group $G$ with respect to the pair $(S, \alpha)$ is connected if and only if $G=\langle S\rangle$ and  $|G:\langle SS^{-1}\rangle|\leq 2$.

\section{Connected   integral cubic generalized Cayley graphs}
A graph is  integral if  all eigenvalues of its adjacency matrix are integers. In this section,
 combining with the results about generalized Cayley graphs in Section $3$,  we determine the groups whose all cubic generalized Cayley graphs  are connected and integral.


\begin{lemma}[\cite{A.K.P.A}]\label{lem4.0}
Let $GC(G,S,\alpha)$ be a generalized Cayley graph of a finite group $G$.
Then $GC(G,S,\alpha)\cong GC(G, S^\beta, \alpha^\beta)$ for all $\beta\in \Aut(G)$, where $\alpha^\beta=\beta\alpha \beta^{-1}$.
\end{lemma}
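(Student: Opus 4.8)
The plan is to show that the automorphism $\beta$ itself, regarded as a bijection of the common vertex set $G$, is the desired graph isomorphism $GC(G,S,\alpha)\to GC(G,S^\beta,\alpha^\beta)$, where $S^\beta=\beta(S)$. So there are really two things to do: first confirm that the pair $(S^\beta,\alpha^\beta)$ is admissible, so that the right-hand side is a legitimate generalized Cayley graph, and then read off the edge correspondence.

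First I would check admissibility in the sense of Definition~\ref{def1.1}. Condition~(a) is immediate: $(\alpha^\beta)^2=\beta\alpha\beta^{-1}\beta\alpha\beta^{-1}=\beta\alpha^2\beta^{-1}=\id$. For (b) and (c) the one identity that does all the work is that, for all $x,y\in G$,
\[
\alpha^\beta(\beta(x)^{-1})\beta(y)=\beta\alpha\beta^{-1}\bigl(\beta(x^{-1})\bigr)\beta(y)=\beta\bigl(\alpha(x^{-1})y\bigr).
\]
Since $\beta$ is a bijection of $G$, the element on the left lies in $S^\beta=\beta(S)$ exactly when $\alpha(x^{-1})y\in S$; letting $x$ (resp. the ordered pair $x,y$) range over $G$ and using that $\beta$ is onto, conditions (b) and (c) for $(S,\alpha)$ transfer verbatim to $(S^\beta,\alpha^\beta)$. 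Equivalently one can record $\omega_{\alpha^\beta}(G)=\beta(\omega_\alpha(G))$, whence $S^\beta\cap\omega_{\alpha^\beta}(G)=\emptyset$ and $\alpha^\beta((S^\beta)^{-1})=S^\beta$ follow from $S\cap\omega_\alpha(G)=\emptyset$ and $\alpha(S^{-1})=S$.

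Then I would read off the edge correspondence from the same identity with $g=\beta(x)$, $h=\beta(y)$: we have $\{x,y\}\in E(GC(G,S,\alpha))$ iff $\alpha(x^{-1})y\in S$ iff $\beta(\alpha(x^{-1})y)\in S^\beta$ iff $\alpha^\beta(\beta(x)^{-1})\beta(y)\in S^\beta$ iff $\{\beta(x),\beta(y)\}\in E(GC(G,S^\beta,\alpha^\beta))$. Since $\beta\in\Aut(G)$ is in particular a bijection of the vertex set $G$, this says precisely that $\beta$ is a graph isomorphism $GC(G,S,\alpha)\to GC(G,S^\beta,\alpha^\beta)$.

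I expect no genuine obstacle: the whole argument reduces to the conjugation identity $\alpha^\beta(\beta(x)^{-1})=\beta(\alpha(x^{-1}))$, after which both the well-definedness of the target graph and the edge bijection follow by applying the bijection $\beta$ to the defining membership conditions. The only point I would be careful to state explicitly, rather than take for granted, is that $(S^\beta,\alpha^\beta)$ does satisfy Definition~\ref{def1.1}, since otherwise $GC(G,S^\beta,\alpha^\beta)$ would not even be defined.
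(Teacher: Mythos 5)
Your argument is correct: the conjugation identity $\alpha^\beta(\beta(x)^{-1})\beta(y)=\beta(\alpha(x^{-1})y)$ does all the work, both for verifying that $(S^\beta,\alpha^\beta)$ satisfies Definition~\ref{def1.1} and for showing that $\beta$ itself carries edges to edges, and you rightly flag the well-definedness check that is easy to overlook. Note that the paper gives no proof of this lemma---it is quoted from \cite{A.K.P.A}---and your argument is exactly the standard one used there, so there is nothing to add.
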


By Lemma \ref{lem4.0}, to study all generalized Cayley graphs of a finite group $G$, it suffices to study the generalized Cayley graphs induced by the involutory automorphisms in different conjugate classes of $\Aut(G)$.

\begin{lemma}[\cite{B.C}]\label{lem4.2}
A  connected integral cubic graph has $n$ vertices, where $n\in \{4,6,8,10,12,20,24,30\}$.
Moreover, all cubic, connected, integral  graphs are displayed in Figure \ref{fig1}.
\end{lemma}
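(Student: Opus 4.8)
The plan for establishing this classical census is to combine a handful of elementary spectral constraints on cubic graphs with a diameter bound, thereby reducing the statement to a finite enumeration. First I would observe that if $\Gamma$ is a connected cubic graph then $3$ is a simple eigenvalue and every eigenvalue lies in $[-3,3]$ (Perron--Frobenius for connected regular graphs); if in addition $\Gamma$ is integral, its spectrum is supported on $\{-3,-2,-1,0,1,2,3\}$, so $\Gamma$ has at most seven distinct eigenvalues. Since a connected graph of diameter $D$ has at least $D+1$ distinct eigenvalues (the powers $I,A,\dots,A^{D}$ are linearly independent), this forces $\mathrm{diam}(\Gamma)\le 6$; and a breadth-first search from a vertex of a cubic graph of diameter at most $6$ yields $|V(\Gamma)|\le 1+3(1+2+4+8+16+32)=190$. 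In particular the number of vertices $n$ is bounded.

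Next I would extract arithmetic conditions from trace identities. Writing the spectrum as $3^{1},2^{a},1^{b},0^{c},(-1)^{d},(-2)^{e},(-3)^{f}$, so that $n=1+a+b+c+d+e+f$, the identity $\mathrm{tr}(A)=0$ gives $3+2a+b-d-2e-3f=0$; the identity $\mathrm{tr}(A^{2})=3n$ (the number of closed walks of length $2$ is the sum of the degrees) gives $9+4a+b+d+4e+9f=3n$; the identity $\mathrm{tr}(A^{3})=6t\ge 0$, where $t$ is the number of triangles, gives $27+8a+b-d-8e-27f\ge 0$; and $\mathrm{tr}(A^{4})\ge 15n$, the defect $\mathrm{tr}(A^{4})-15n$ being $8$ times the number of $4$-cycles, gives a further inequality, with analogous sharper relations coming from $\mathrm{tr}(A^{6})$ and from interlacing against small induced subgraphs. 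Finally, for a connected regular graph $-3$ is an eigenvalue exactly when $\Gamma$ is bipartite, and in that case the spectrum is symmetric about $0$, forcing $f=1$, $a=e$ and $b=d$. Feeding the finitely many multiplicity vectors compatible with $n\le 190$ into this linear system together with the positivity and parity constraints leaves only $n\in\{4,6,8,10,12,20,24,30\}$, together with a short list of candidate spectra for each such $n$.

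Finally, for each admissible order $n$ I would list all connected cubic graphs on $n$ vertices and retain exactly those whose characteristic polynomial splits into linear factors over $\mathbb{Z}$; the graphs that survive are the thirteen shown in Figure~\ref{fig1}, which proves the lemma.

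The hard part is the bookkeeping in the last two steps. Eliminating every order between roughly $30$ and $190$ (and the small orders such as $14,16,18$ that do not occur) genuinely requires pushing the moment equations and the interlacing inequalities far enough to rule out each infeasible multiplicity vector; and the concluding enumeration of cubic graphs of the surviving orders, while routine in principle, is the computational heart of the argument. I would therefore present the diameter bound and the trace identities in full and then invoke the finite verification, exactly as carried out in \cite{B.C}.
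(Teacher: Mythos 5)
The paper does not prove this lemma at all --- it is quoted directly from Bussemaker--Cvetkovi\'{c} \cite{B.C} --- and your outline is exactly the standard argument of that reference (eigenvalues confined to $\{-3,\dots,3\}$, hence at most seven distinct eigenvalues, diameter at most $6$, the $190$-vertex bound, spectral-moment and bipartiteness constraints, then a finite check), with the decisive case analysis and enumeration deferred back to the same citation; the individual facts you invoke (trace identities, $\mathrm{tr}(A^4)=15n+8\cdot\#C_4$ for cubic graphs, $-3$ an eigenvalue iff bipartite) are all correct. So your treatment is essentially the same as the paper's --- both ultimately rest on \cite{B.C} --- with your sketch adding only the (sound) explanation of why the problem is finite.
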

\begin{figure}[t]
\centering
\includegraphics[scale=0.6]{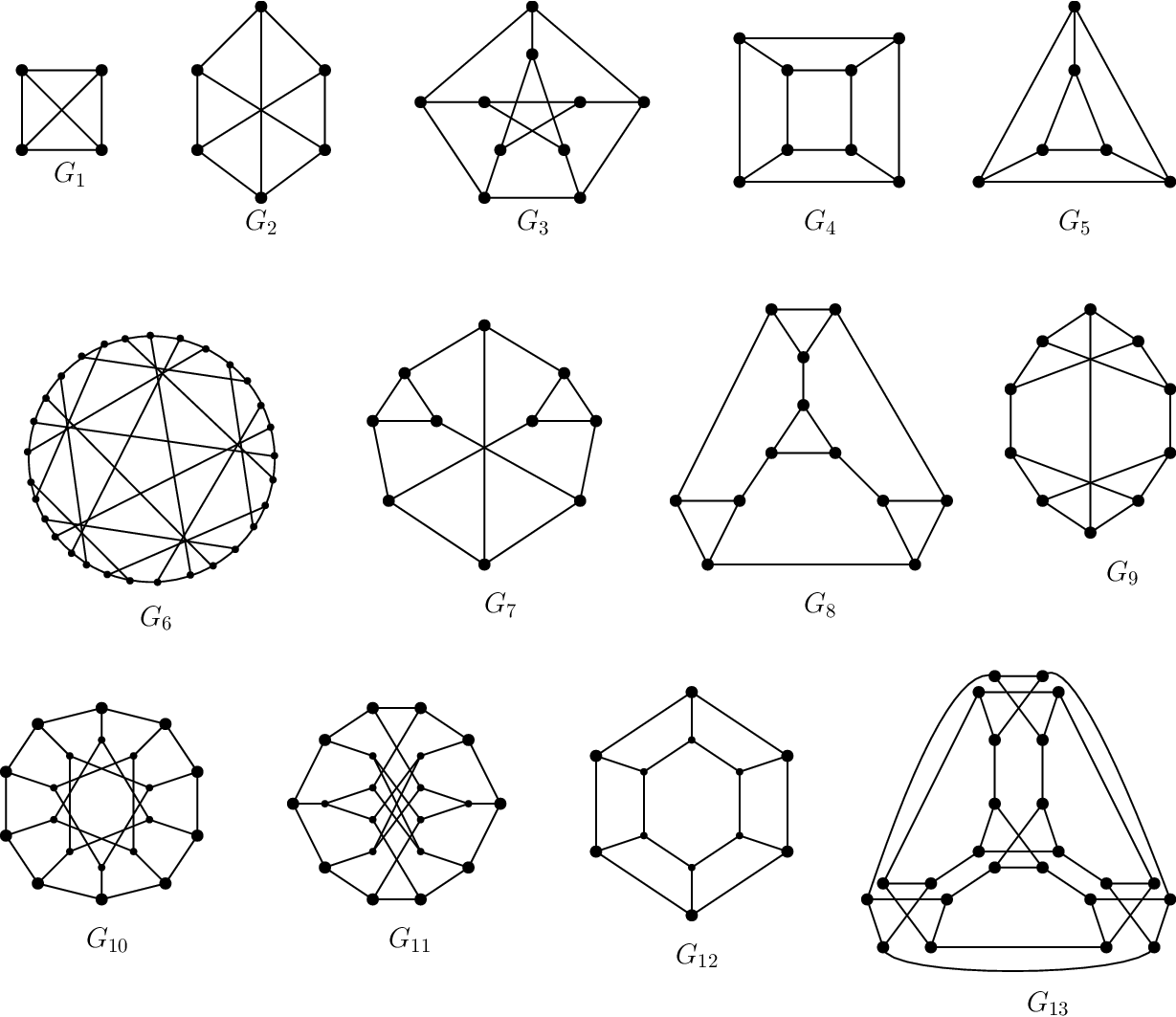}
\caption{All connected, integral, cubic  graphs}
\label{fig1}
\end{figure}

\begin{lemma}[\cite{A.T2}]\label{lem4.3}
Let $G$ be an abelian group with square free subset.
For every square free subset $S$ of $G$ of size $3$, $Cay^+(G,S)$ is connected and integral if and only if $G$ is isomorphic to one of the groups $\mathbb{Z}_2^2$, $\mathbb{Z}_2^3$, $\mathbb{Z}_6$ and $\mathbb{Z}_8$.
\end{lemma}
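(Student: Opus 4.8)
The plan is to reduce the statement to a finite verification by combining the classification of connected integral cubic graphs (Lemma~\ref{lem4.2}) with the character-theoretic description of the spectrum of a Cayley sum graph, and then to handle the two implications separately.

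First I would record two preliminary facts, writing $G$ additively (so that the set written $SS^{-1}$ in Section~3 becomes $S-S$). Since $S$ is square-free of size $3$, the graph $Cay^+(G,S)$ has no loops and is $3$-regular, hence cubic; and since $G$ is abelian, $\iota$ fixes $\langle S-S\rangle$, so $Cay^+(G,S)=GC(G,S,\iota)$ is, by Theorem~\ref{thm3.4}, connected exactly when $\langle S\rangle=G$ and $|G:\langle S-S\rangle|\le 2$. For the spectrum, the adjacency operator of $Cay^+(G,S)$ maps a character $\chi$ of $G$ to $\hat S(\chi)\,\overline\chi$, where $\hat S(\chi)=\sum_{s\in S}\chi(s)$; hence on each self-conjugate character $\chi=\overline\chi$ it has the eigenvalue $\hat S(\chi)\in\mathbb{Z}$, while each conjugate pair $\{\chi,\overline\chi\}$ of non-real characters contributes the two eigenvalues $\pm|\hat S(\chi)|$. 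Thus $Cay^+(G,S)$ is integral if and only if $|\hat S(\chi)|^{2}$ is a perfect square in $\mathbb{Z}$ for every character $\chi$ of $G$.

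For the ``only if'' direction, suppose every square-free triple $S$ in $G$ gives a connected integral cubic graph; in particular such a triple exists, so $G$ has at least three non-squares, and applying Lemma~\ref{lem4.2} to one such triple forces $|G|\in\{4,6,8,10,12,20,24,30\}$, leaving only finitely many abelian groups to inspect. For each abelian group of one of these orders that is not on the target list, I would exhibit a concrete offending square-free triple: for instance $\{(1,0),(0,1),(1,1)\}$ in $\mathbb{Z}_4\times\mathbb{Z}_2$ gives $|\hat S(\chi)|^{2}=5$, $\{1,3,5\}$ in $\mathbb{Z}_{10}$ gives $|\hat S(\chi)|^{2}=\tfrac{3+\sqrt5}{2}\notin\mathbb{Q}$, and $\{1,3,7\}$ in $\mathbb{Z}_{12}$ gives $|\hat S(\chi)|^{2}=3$; similarly chosen triples (either with $|\hat S(\chi)|$ irrational, or with $\langle S\rangle\ne G$ or $|G:\langle S-S\rangle|>2$, so that the graph is disconnected) dispose of $\mathbb{Z}_6\times\mathbb{Z}_2$, $\mathbb{Z}_{20}$, $\mathbb{Z}_{10}\times\mathbb{Z}_2$, $\mathbb{Z}_{24}$ together with the two non-cyclic abelian groups of order $24$, and $\mathbb{Z}_{30}$. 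Conversely, for each of $\mathbb{Z}_2^{2}$, $\mathbb{Z}_2^{3}$, $\mathbb{Z}_6$, $\mathbb{Z}_8$ I would list the square-free triples up to $\Aut(G)$ — here Lemma~\ref{lem4.0} helps, since for abelian $G$ it yields $Cay^+(G,S^{\beta})\cong Cay^+(G,S)$ for all $\beta\in\Aut(G)$ — and check directly that every resulting graph is connected and integral: one obtains $K_4$ for $\mathbb{Z}_2^{2}$, the $3$-cube $Q_3$ for the generating triples in $\mathbb{Z}_2^{3}$ and for $\mathbb{Z}_8$, and $K_{3,3}$ for $\mathbb{Z}_6$, with the integral spectra $\{3,(-1)^{3}\}$, $\{3,1^{3},(-1)^{3},-3\}$ and $\{3,0^{4},-3\}$ respectively.

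The spectral bookkeeping and the $K_4$/$Q_3$/$K_{3,3}$ identifications are routine. The real difficulty lies in the ``only if'' case analysis: because the hypothesis quantifies over \emph{all} square-free triples, for the larger admissible groups — notably $\mathbb{Z}_{20}$, $\mathbb{Z}_{24}$, $\mathbb{Z}_{30}$ and the non-cyclic groups of orders $8$, $12$ and $24$ — one cannot merely match a graph from Figure~\ref{fig1} but must actually produce one bad triple, and certifying its non-integrality amounts to locating a character $\chi$ at which $\hat S(\chi)$ is a cyclotomic integer of irrational modulus. Carrying this out uniformly — e.g.\ by always choosing triples of the form $\{1,3,2k+1\}\subseteq\mathbb{Z}_n$ and evaluating $\hat S$ at a well-chosen root of unity — is the crux; failing a clean pattern, one can always fall back on the finiteness supplied by Lemma~\ref{lem4.2} and simply compute the spectra of the cubic graphs $Cay^+(G,S)$ over the remaining finite list of pairs $(G,S)$.
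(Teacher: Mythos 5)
First, note that the paper does not prove this lemma at all: it is quoted from \cite{A.T2}, so there is no internal proof to compare against, and your reconstruction has to stand on its own. Your overall route (character sums to describe the spectrum of $Cay^+(G,S)$, Theorem \ref{thm3.4} for connectivity, Lemma \ref{lem4.2} to confine $|G|$ to $\{4,6,8,10,12,20,24,30\}$, then a finite inspection) is the natural one, and your three computed witnesses are correct ($|\hat S(\chi)|^2=5$ for $\{(1,0),(0,1),(1,1)\}\subseteq\mathbb{Z}_4\times\mathbb{Z}_2$, $(3+\sqrt5)/2$ for $\{1,3,5\}\subseteq\mathbb{Z}_{10}$, and $3$ for $\{1,3,7\}\subseteq\mathbb{Z}_{12}$ at the order-$6$ character). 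However, there is a genuine problem in your sufficiency direction, hidden in the phrase ``for the generating triples in $\mathbb{Z}_2^3$''. In $\mathbb{Z}_2^3$ every nonzero element is a non-square, so $S=\{(1,0,0),(0,1,0),(1,1,0)\}$ is a legitimate square-free triple; since the group has exponent $2$, $Cay^+(\mathbb{Z}_2^3,S)$ coincides with the Cayley graph $Cay(\mathbb{Z}_2^3,S)$ and is $2K_4$, which is disconnected. Hence the statement with the universal quantifier ``for every square-free subset $S$ of size $3$'' is simply not provable for $\mathbb{Z}_2^3$, and your argument tacitly replaces it by the per-graph statement actually established in \cite{A.T2} (if $Cay^+(G,S)$ is a connected integral cubic Cayley sum graph then $G\cong\mathbb{Z}_2^2,\mathbb{Z}_2^3,\mathbb{Z}_6,\mathbb{Z}_8$, and each of these groups admits one). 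You should either prove that per-graph version explicitly — your spectral bookkeeping does essentially this, and it is all that is used in the proof of Theorem \ref{thm4.6} — or flag that the quoted formulation needs this reinterpretation; as written, your ``if'' direction does not match the statement you set out to prove.

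Second, the necessity direction is still only a sketch. After the reduction via Lemma \ref{lem4.2} you must exhibit a bad square-free triple (disconnected, or with some $|\hat S(\chi)|$ irrational) for \emph{every} abelian group of order in $\{4,6,8,10,12,20,24,30\}$ outside the target list; you do this for three groups and dispose of $\mathbb{Z}_6\times\mathbb{Z}_2$, $\mathbb{Z}_{20}$, $\mathbb{Z}_{10}\times\mathbb{Z}_2$, $\mathbb{Z}_{30}$ and the three remaining abelian groups of order $24$ with ``similarly chosen triples'', which is not yet a proof (your fallback of a finite spectral computation is legitimate, but it has to be carried out or at least specified triple by triple). Two smaller points: your step ``in particular such a triple exists'' uses the hypothesis that $G$ possesses a square-free subset of size $3$ — this is also what removes $\mathbb{Z}_4$ from the discussion, and it should be said; and when you invoke Theorem \ref{thm3.4} for connectivity you should note separately that for elementary abelian $2$-groups the map $\iota$ is the identity, so $Cay^+(G,S)$ is an ordinary Cayley graph and connectivity is just $\langle S\rangle=G$, rather than an instance of the generalized Cayley machinery of Section~3.
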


\begin{lemma}[\cite{W.S}]\label{lem4.11}
If the direct product $X\times Y$ of graphs  $X$ and $Y$ is connected, then both $X$ and  $Y$ are connected.
\end{lemma}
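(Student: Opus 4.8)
The plan is to realize each coordinate projection as a walk-preserving surjection from $X\times Y$ onto the corresponding factor, so that connectedness of the product transfers immediately to each factor; equivalently, the same argument can be phrased contrapositively, showing that a disconnected factor already forces the product to be disconnected.

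The key observation is purely definitional: an edge $\{(x_1,y_1),(x_2,y_2)\}$ of $X\times Y$ exists only when $\{x_1,x_2\}\in E(X)$ (and $\{y_1,y_2\}\in E(Y)$). Hence the projection $\pi_X\colon(x,y)\mapsto x$ carries every edge of $X\times Y$ to an edge of $X$, and likewise $\pi_Y\colon(x,y)\mapsto y$ carries edges to edges of $Y$. Consequently, applying $\pi_X$ vertex by vertex to a walk $(x_0,y_0),(x_1,y_1),\dots,(x_k,y_k)$ in $X\times Y$ yields a walk $x_0,x_1,\dots,x_k$ in $X$, and symmetrically for $\pi_Y$.

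Now suppose $X\times Y$ is connected. Since the product is nonempty, both $V(X)$ and $V(Y)$ are nonempty, so $\pi_X$ is surjective onto $V(X)$. Given $a,b\in V(X)$, fix any $y\in V(Y)$; by hypothesis there is a walk in $X\times Y$ joining $(a,y)$ to $(b,y)$, and its $\pi_X$-image is a walk from $a$ to $b$ in $X$. Thus $X$ is connected, and the identical argument with $\pi_Y$ shows $Y$ is connected.

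The only subtlety — and the nearest thing to an obstacle — is the degenerate case in which one factor, say $Y$, has no edges at all (for instance $Y$ a single vertex, since the graphs here are loopless): then $X\times Y$ has no edges either, so connectedness of the product forces $|V(X\times Y)|\le 1$, whence $|V(X)|=|V(Y)|=1$ and both factors are trivially connected, consistent with the projection argument above. It is worth flagging that the converse implication is false — the product of two connected bipartite graphs is disconnected — so one should expect only the stated direction to hold.
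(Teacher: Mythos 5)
The paper gives no proof of this lemma at all---it is quoted from Imrich and Klav\v{z}ar \cite{W.S}---so there is no internal argument to compare against; your proof is correct and is the standard one. Since an edge of $X\times Y$ projects to an edge in each factor (by the definition of the direct product used in the paper), the coordinate projections send walks to walks, and connectedness of the product transfers to $X$ and to $Y$ exactly as you argue. Your paragraph on an edgeless factor is harmless but unnecessary, since the projection argument already covers that case; likewise the closing aside about the converse (which, as stated, needs both bipartite factors to contain an edge) plays no role in the proof.
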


\begin{lemma}[\cite{A.K.P.A}]\label{lem4.12}
Let $X=GC(G_1,S_1,\alpha_1)$, $Y=GC(G_2,S_2,\alpha_2)$, and let $\alpha_1\times \alpha_2$ be the automorphism of $G_1\times G_2$ defined by $(\alpha_1\times \alpha_2)(g_1,g_2)=(\alpha_1(g_1),\alpha_2(g_2))$ for all $g_1\in G_1$ and $g_2\in G_2$.
Then $X \times Y= GC(G_1\times G_2, S_1\times S_2, \alpha_1\times \alpha_2)$.
\end{lemma}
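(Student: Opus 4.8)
The plan is to verify directly, from Definition~\ref{def1.1}, that the pair $(S_1\times S_2,\ \alpha_1\times\alpha_2)$ is admissible on the group $G_1\times G_2$, and then to check that the resulting generalized Cayley graph has precisely the edge set of the direct product $X\times Y$. No deep idea is involved; the argument is coordinatewise bookkeeping with group inverses, so I would organize it to keep the two coordinates visibly separated throughout. Implicitly one also uses the harmless identification $V(X\times Y)=V(X)\times V(Y)=G_1\times G_2$ (the paper's curly-brace notation for $V(X\times Y)$ is a typo for ordered pairs).

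First I would note that $\alpha_1\times\alpha_2$ is an automorphism of $G_1\times G_2$, being a bijective homomorphism, and that $(\alpha_1\times\alpha_2)^2=\alpha_1^2\times\alpha_2^2=\id$, which is condition~(a). For condition~(b), for $(g_1,g_2)\in G_1\times G_2$ one computes
\[
(\alpha_1\times\alpha_2)\big((g_1,g_2)^{-1}\big)(g_1,g_2)=\big(\alpha_1(g_1^{-1})g_1,\ \alpha_2(g_2^{-1})g_2\big),
\]
and since $\alpha_1(g_1^{-1})g_1\notin S_1$ by condition~(b) for $(S_1,\alpha_1)$, this pair cannot lie in $S_1\times S_2$. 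Condition~(c) is similar: if
\[
(\alpha_1\times\alpha_2)\big((h_1,h_2)^{-1}\big)(g_1,g_2)=\big(\alpha_1(h_1^{-1})g_1,\ \alpha_2(h_2^{-1})g_2\big)\in S_1\times S_2,
\]
then $\alpha_i(h_i^{-1})g_i\in S_i$ for $i=1,2$, so condition~(c) for each factor gives $\alpha_i(g_i^{-1})h_i\in S_i$, whence $(\alpha_1\times\alpha_2)\big((g_1,g_2)^{-1}\big)(h_1,h_2)\in S_1\times S_2$. Thus $GC(G_1\times G_2,S_1\times S_2,\alpha_1\times\alpha_2)$ is well defined.

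Finally I would compare adjacency. For vertices $(x_1,y_1),(x_2,y_2)$ the same computation yields
\[
(\alpha_1\times\alpha_2)\big((x_1,y_1)^{-1}\big)(x_2,y_2)=\big(\alpha_1(x_1^{-1})x_2,\ \alpha_2(y_1^{-1})y_2\big),
\]
which lies in $S_1\times S_2$ exactly when $\alpha_1(x_1^{-1})x_2\in S_1$ and $\alpha_2(y_1^{-1})y_2\in S_2$, i.e.\ exactly when $x_1$ is adjacent to $x_2$ in $X$ and $y_1$ is adjacent to $y_2$ in $Y$. By the definition of the direct product this is the adjacency relation of $X\times Y$, so the two graphs coincide. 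The only point that takes any care is the verification of conditions~(a)--(c), and even there the work is just pushing factorizations of group elements through the definitions; I do not expect a genuine obstacle.
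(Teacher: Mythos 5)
Your verification is correct. Note that the paper itself gives no proof of this lemma---it is imported from \cite{A.K.P.A}---and your coordinatewise check of conditions (a)--(c) of Definition \ref{def1.1} followed by the comparison of adjacency relations is exactly the standard argument one would expect there, so there is nothing to flag beyond the vertex-set identification you already made explicit.
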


Based on the above lemmas, we determine the abelian groups whose all cubic generalized Cayley graphs are connected and integral.

\begin{theorem}\label{thm4.6}
Let $G$ be a finite abelian group admitting cubic generalized Cayley graph.
Then all cubic generalized Cayley graphs of $G$ are connected and integral if and only if $G\cong  \mathbb{Z}_6$, $\mathbb{Z}_2^3$ or $\mathbb{Z}_8$.
\end{theorem}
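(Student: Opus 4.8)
The plan is to reduce almost everything to Lemma~\ref{lem4.3} via the inverse automorphism $\iota$, treating the elementary abelian group $\mathbb{Z}_2^3$ by a direct computation. First I would record two simple facts: the neighbourhood of a vertex $g$ in $GC(G,S,\alpha)$ is $\alpha(g)S$ and the graph has no loops by condition~(b) of Definition~\ref{def1.1}, so $GC(G,S,\alpha)$ is $|S|$-regular and is cubic precisely when $|S|=3$, in which case $S\cap\Omega_\alpha(G)\neq\emptyset$ because $|S|$ is odd; and since $\alpha$ is a nontrivial involution some $g$ has $\alpha(g)\neq g$, so $\omega_\alpha(g)\neq e$, giving $|\omega_\alpha(G)|\geq 2$ and hence $|S|\le|G|-2$ (as $S\cap\omega_\alpha(G)=\emptyset$), which already rules out cubic generalized Cayley graphs when $|G|=4$.

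For necessity, I would argue as follows. If $G$ admits a cubic generalized Cayley graph (which it does by hypothesis) and all of them are connected and integral, then one of them is a connected integral cubic graph, so $|G|\in\{4,6,8,10,12,20,24,30\}$ by Lemma~\ref{lem4.2}; the case $|G|=4$ is impossible by the remark above, so $|G|$ is even and $\geq 6$. If $G$ is not an elementary abelian $2$-group, then $\iota$ is a nontrivial involution and, writing $G[2]=\{g\in G:g^2=e\}$, the set of non-squares of $G$ has size $|G|-|G|/|G[2]|\geq|G|/2\geq 3$, so $G$ carries a square free subset $S$ of size $3$; since $GC(G,S,\iota)=Cay^+(G,S)$ for all such $S$, the hypothesis together with Lemma~\ref{lem4.3} forces $G\in\{\mathbb{Z}_2^2,\mathbb{Z}_2^3,\mathbb{Z}_6,\mathbb{Z}_8\}$, hence (as $|G|\ge 6$ and $G$ is not elementary abelian) $G\cong\mathbb{Z}_6$ or $\mathbb{Z}_8$. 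If instead $G$ is an elementary abelian $2$-group, then $|G|$ is a power of $2$, so $|G|=8$ and $G\cong\mathbb{Z}_2^3$.

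For sufficiency I would check the three groups in turn. When $G=\mathbb{Z}_6$, $\Aut(\mathbb{Z}_6)=\langle\iota\rangle$ and $\omega_\iota(\mathbb{Z}_6)$ is exactly the set of squares, so the cubic generalized Cayley graphs of $\mathbb{Z}_6$ are precisely the graphs $Cay^+(\mathbb{Z}_6,S)$ with $|S|=3$ square free, which are connected and integral by Lemma~\ref{lem4.3}; the same argument handles $G=\mathbb{Z}_8$ once I note that, among the three nontrivial involutions of $\Aut(\mathbb{Z}_8)\cong\mathbb{Z}_2\times\mathbb{Z}_2$ (namely $g\mapsto g^3$, $g\mapsto g^5$ and $\iota$), the first two have $\Omega_\alpha(\mathbb{Z}_8)=\emptyset$ and so support no cubic generalized Cayley graph. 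For $G=\mathbb{Z}_2^3$, by Lemma~\ref{lem4.0} it is enough to consider one involution from each conjugacy class of $\mathrm{GL}(3,2)$; all involutions of $\mathrm{GL}(3,2)$ are transvections and they form a single class, so I would fix one transvection $\alpha$, compute the partition $\mathbb{Z}_2^3=\omega_\alpha(G)\cup\Omega_\alpha(G)\cup\mho_\alpha(G)$ (finding $|\omega_\alpha(G)|=|\Omega_\alpha(G)|=2$ and that $\mho_\alpha(G)$ is a union of two $\alpha$-orbits of size $2$), and conclude that every admissible $S$ of size $3$ is one such orbit together with one element of $\Omega_\alpha(G)$ — four sets in total. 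Each of these $S$ generates $G$ with $\langle SS^{-1}\rangle$ of index $2$, so by Theorem~\ref{thm3.4} the graph $GC(\mathbb{Z}_2^3,S,\alpha)$ is connected, and since $S\cap\langle SS^{-1}\rangle=\emptyset$ it is bipartite; being a connected cubic bipartite graph on $8$ vertices, it is the cube $Q_3$, which is integral.

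The step I expect to be the main obstacle is the $\mathbb{Z}_2^3$ case of the sufficiency direction: it requires knowing that $\mathrm{GL}(3,2)$ has a single class of involutions, computing the $\omega/\Omega/\mho$ partition for a transvection, enumerating the four admissible triples, and recognising the resulting graph as $Q_3$ (or computing its spectrum). The remaining work is routine bookkeeping around Lemmas~\ref{lem4.2} and~\ref{lem4.3} and the elementary fact that an abelian group of even order $\geq 6$ has at least three non-squares.
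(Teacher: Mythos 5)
Your proposal is correct and follows essentially the same route as the paper's own proof: bound $|G|$ via Lemma~\ref{lem4.2}, reduce the non-elementary-abelian cases to Lemma~\ref{lem4.3} through Cayley sum graphs under $\iota$ (using $\Omega_\alpha=\emptyset$ to exclude the non-$\iota$ involutions of $\mathbb{Z}_8$ and the odd-size constraint $S\cap\Omega_\alpha(G)\neq\emptyset$), and handle $\mathbb{Z}_2^3$ by enumerating the four admissible triples and applying Theorem~\ref{thm3.4}. The only differences are minor refinements: your count of non-squares ($\geq |G|/2\geq 3$ for even $|G|\geq 6$) replaces the paper's appeal to the proof of Theorem~3.18 in the Amooshahi--Taeri paper to dispose of the ``no square-free $3$-subset'' case (which there is only $\mathbb{Z}_4$), and you identify the $\mathbb{Z}_2^3$ graphs abstractly as the cube $Q_3$ via bipartiteness rather than by comparison with the figure.
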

\begin{proof}
\textcolor{red}{
According to Lemma \ref{lem4.2}, to determine the abelian groups whose all cubic generalized Cayley graphs are connected and interal, it suffices to consider the abelian groups with order $n\in \{4,6,8,10,12,20,24,30\}$.
Let $G$ be an abelian group with order $n\in\{4, 6, 8, 10, 12, 20, 24, 30\}$.
Then $G$ can be divided into three cases.}

\textbf{Case~1}: $G$ is an elementary abelian group.
In this case, $G\cong \mathbb{Z}_2^2$ or $G\cong\mathbb{Z}_2^3$.

$\bullet$ For  $\mathbb{Z}_2^2$,  all involutory automorphisms of $\mathbb{Z}_2^2$ are in the same conjugate class.
    Let $\delta: (0,1)\mapsto (1,0), (1,0)\mapsto (0,1)$ be an involutory automorphism of $\mathbb{Z}_2^2$ and  we obtain  subsets  $\omega_\delta(\mathbb{Z}_2^2)=\{(0,0),(1,1)\}$, $\Omega_\delta(\mathbb{Z}_2^2)=\emptyset$ and $\mho_\delta(\mathbb{Z}_2^2)=\{(0,1), (1,0)\}$.
    Thus, there is no cubic generalized Cayley graph of $\mathbb{Z}_2^2$.

$\bullet$ For $\mathbb{Z}_2^3$, $\Aut(\mathbb{Z}_2^3)\cong GL(3,2)$ and all involutory automorphisms  of $Z_2^3$ are in the same conjugate class.
By Lemma \ref{lem4.0}, it is enough to discuss the generalized Cayley graphs induced by  $\alpha: (1,0,0)\mapsto (1,0,0), (0,1,0)\mapsto (0,1,0), (0,0,1)\mapsto (0,1,1)$.
With respect to $\alpha$, we obtain three subsets  $\omega_\alpha(\mathbb{Z}_2^3)=\{(0,0,0), (0,1,0)\}$, $\Omega_\alpha(\mathbb{Z}_2^3)=\{(1,0,0), (1,1,0)\}$ and $\mho_\alpha(\mathbb{Z}_2^3)=\{(0,0,1), (0,1,1), (1,0,1), (1, 1, 1)\}$.
Thus, the generalized Cayley subsets of size $3$ induced by $\alpha$ are $ S_1=\{(1,0,0),(0,0,1),(0,1,1)\}$, $S_2=\{(1,0,0),(1,0,1),(1,1,1)\}$, $S_3=\{(1,1,0),(0,0,1),(0,1,1)\}$ and $S_4=\{(1,1,0),(1,0,1),(1,1,1)\}$.
It is easy to verify that for $1\leq i\leq 4$, $\langle S_i\rangle=\mathbb{Z}_2^3$ and $|\langle S_iS_i^{-1}\rangle|\geq \frac{1}{2}|\mathbb{Z}_2^3|$.
By Theorem \ref{thm3.4}, all cubic generalized Cayley graphs are connected.
Moreover, for each $1\leq i\leq 4$, $GC(\mathbb{Z}_2^3, S_i, \alpha)\cong G_4,$ which is displayed in Figure \ref{fig1}.

\textcolor{red}{
\textbf{Case~2}: $G$ is not an elementary abelian $2$-group  without square free subset of size $3$.}

\textcolor{red}{
From the proof of Theory 3.18 in  \cite{A.T2}, it follows that $G\cong \mathbb{Z}_4$.
In this case, there is no cubic generalized Cayley graph of  $\mathbb{Z}_4$.}

\textcolor{red}{
\textbf{Case~3}: $G$ is not an elementary abelian $2$-group  with  square free subsets of size $3$.}

\textcolor{red}{
In this case, each Cayley sum graph of $G$ is a special generalized Cayley graph induced by the inverse automorphism.
Thus, all cubic generalized Cayley graphs of $G$ are connected and integral implies that all cubic Cayley sum graphs of $G$ are connected and integral.}
From Lemma \ref{lem4.3}, it suffices to consider the groups $\mathbb{Z}_6$ and $\mathbb{Z}_8$.

$\bullet$ For $\mathbb{Z}_6$, it has the unique involutory automorphism $\iota$, which implies the generalized Cayley graphs of $\mathbb{Z}_6$ are Cayley sum graphs.

$\bullet$ For $\mathbb{Z}_8$, the involutory automorphisms of which are $\iota$, $\alpha_1: g\mapsto g^3$ and  $\alpha_2:g\mapsto g^5$ for all $g\in \mathbb{Z}_8$.
Since $\Omega_{\alpha_1}(\mathbb{Z}_8)=\Omega_{\alpha_2}(\mathbb{Z}_8)=\emptyset$, there is no cubic generalized Cayley graph induced by $\alpha_1$ and $\alpha_2$.
Thus, all generalized Cayley graphs of $\mathbb{Z}_8$ are Cayley sum graphs.
It follows that all cubic generalized Cayley graphs of $\mathbb{Z}_6$ and  $\mathbb{Z}_8$ are connected and integral.

Therefore, we complete the proof.
\end{proof}

Next, we aim to determine the non-abelian groups whose all cubic generalized Cayley graphs are connected and integral.

\begin{lemma}\label{lem4.13}
For any integer $n\geq 2$, there exists a cubic generalized Cayley graph of the dicyclic group $T_{4n}$ which is not connected.
\end{lemma}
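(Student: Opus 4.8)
The plan is to exhibit, for every $n\ge 2$, one involutory automorphism $\alpha$ of $T_{4n}$ and one generalized Cayley subset $S$ of size $3$ whose generated subgroup is proper; then Theorem~\ref{thm3.4} (or Lemma~\ref{lem3.2} applied directly) forces $GC(T_{4n},S,\alpha)$ to be disconnected, while $|S|=3$ makes it cubic. Write $T_{4n}=\langle a,b\mid a^{2n}=e,\ b^{2}=a^{n},\ b^{-1}ab=a^{-1}\rangle$, so that $\langle a\rangle$ is a cyclic subgroup of index $2$. I would take $\alpha$ to be the map inverting $a$ and fixing $b$, i.e.\ $\alpha(a)=a^{-1}$ and $\alpha(b)=b$. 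A direct check shows $\alpha$ respects all three defining relations — the relation $b^{2}=a^{n}$ only needs $a^{n}=a^{-n}$, which holds since $a^{2n}=e$ — so $\alpha$ extends to an endomorphism, and as it is its own inverse on generators it is an automorphism; it is obviously an involution, and it is nontrivial because $a^{2}\ne e$ when $n\ge 2$.

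The next step is to identify $\omega_{\alpha}(T_{4n})$. For $g=a^{k}$ one gets $\omega_{\alpha}(a^{k})=\alpha(a^{-k})a^{k}=a^{2k}$, and using $ba^{k}=a^{-k}b$ together with $b^{-1}=a^{n}b$ a short computation gives $\omega_{\alpha}(a^{k}b)=a^{-2k}$; hence $\omega_{\alpha}(T_{4n})=\langle a^{2}\rangle$. In particular, since $2n$ is even, no odd power of $a$ lies in $\omega_{\alpha}(T_{4n})$. I would therefore let $S$ consist of three distinct odd powers of $a$, for instance $S=\{a,a^{3},a^{5}\}$ once $2n>5$. Condition~(b) of Definition~\ref{def1.1}, namely $S\cap\omega_{\alpha}(T_{4n})=\emptyset$, then holds automatically, and condition~(c), namely $\alpha(S^{-1})=S$, holds because $\alpha(a^{-k})=a^{k}$ sends each $a^{-k}$ with $a^{k}\in S$ back into $S$. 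Thus $S$ is a genuine generalized Cayley subset of $T_{4n}$ induced by $\alpha$, of size $3$.

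To conclude, since $S\subseteq\langle a\rangle$ we have $\langle S\rangle\le\langle a\rangle\subsetneq T_{4n}$; equivalently $SS^{-1}\subseteq\langle a\rangle$, so Lemma~\ref{lem3.2} gives $C(e)=\langle S^{-1}S\rangle\cup\langle SS^{-1}\rangle s\subseteq\langle a\rangle\ne T_{4n}$, that is, $GC(T_{4n},S,\alpha)$ is disconnected (alternatively, Theorem~\ref{thm3.4} applies, since connectedness would force $\langle S\rangle=T_{4n}$). This settles the lemma for $n\ge 3$. The delicate point — and the main obstacle — is precisely that this construction requires three \emph{distinct} odd powers of $a$, of which there are exactly $n$; for $n=2$ (where $T_{8}\cong Q_{8}$) only two are available, so I would treat $n=2$ separately, by running through the few involutory automorphisms of $Q_{8}$ and their admissible size-$3$ subsets directly (or by a small ad hoc modification of $S$). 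Apart from that borderline case, the argument is entirely routine: the real content is the identity $\omega_{\alpha}(T_{4n})=\langle a^{2}\rangle$ combined with the connectedness criterion of Section~3.
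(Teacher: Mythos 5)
Your construction works and is sound for every $n\ge 3$, and it is a genuinely (if mildly) different route from the paper's. The paper takes $\alpha\colon a\mapsto a,\ b\mapsto a^n b$ and the set $S=\{b,a^2b,a^4b\}$ of elements outside $\langle a\rangle$, computes $\langle SS^{-1}\rangle=\langle a^2\rangle$ of order $n<\tfrac12|T_{4n}|$, and invokes Theorem~\ref{thm3.4}; you instead take $\alpha\colon a\mapsto a^{-1},\ b\mapsto b$, compute $\omega_\alpha(T_{4n})=\langle a^2\rangle$ (your formulas $\omega_\alpha(a^k)=a^{2k}$, $\omega_\alpha(a^kb)=a^{-2k}$ are correct), and choose three distinct odd powers of $a$, so that $S\subseteq\langle a\rangle$ and disconnectedness follows already from $\langle S\rangle\le\langle a\rangle\subsetneq T_{4n}$ (the crudest clause of Theorem~\ref{thm3.4}/Lemma~\ref{lem3.2}). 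Your verification that $S$ is a generalized Cayley subset ($S\cap\omega_\alpha=\emptyset$, $\alpha(S^{-1})=S$) is complete, so for $n\ge 3$ your proof is correct and arguably simpler than the paper's.

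The genuine gap is the deferred case $n=2$, and it cannot be closed as you hope: $T_8\cong Q_8$, and the paper's own Case~2 of the final theorem shows that \emph{every} cubic generalized Cayley graph of $Q_8$ is connected (indeed isomorphic to $G_4$), so no ``ad hoc modification of $S$'' or search through the involutory automorphisms of $Q_8$ can produce a disconnected cubic example. In fairness, this is a defect of the lemma as stated rather than of your strategy alone: the paper's proof also degenerates at $n=2$, since there $a^4=e$ and its set $\{b,a^2b,a^4b\}$ collapses to two elements, so the resulting graph is not cubic; and in the sequel the lemma is only ever applied to $T_{12}$, $T_{20}$, $T_{24}$ and $T_{12}\times\mathbb{Z}_2$, i.e.\ to $n\ge 3$. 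So the honest fix is to restrict the statement to $n\ge 3$ (where your argument is complete), not to promise a separate treatment of $n=2$ — that promise is unfulfillable and, as written, leaves your proof of the stated lemma with a hole exactly at the boundary case you flagged.
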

\begin{proof}
Let $T_{4n}=\langle a, b \mid a^{2n}=b^4=e, a^n=b^2, b^{-1}ab=a^{-1}\rangle$.
Note that $\alpha: a\mapsto a, b\mapsto a^nb$ is an involutory automorphism of $T_{4n}$ and $S=\{b, a^2b, a^4b\}$ is a generalized Cayley subset of $T_{4n}$ induced by $\alpha$.
From $S^{-1}=\{a^nb, a^{n+2}b, a^{n+4}b\}$, we have $\langle SS^{-1}\rangle=\langle a^2\rangle$ and  $|\langle SS^{-1}\rangle|=n<\frac{1}{2}|T_{4n}|$.
By Theorem \ref{thm3.4}, $GC(T_{4n},S,\alpha)$ is not connected.
\end{proof}

\begin{lemma}\label{lem4.14}
For the dihedral group $D_{4n}(n\geq 2)$, all cubic generalized Cayley graphs of $D_{4n}$ are connected and integral if and only of $n=2$.
\end{lemma}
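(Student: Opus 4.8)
Write $D_{4n}=\langle a,b\mid a^{2n}=b^2=e,\ bab^{-1}=a^{-1}\rangle$, and note that $GC(D_{4n},S,\alpha)$ is cubic exactly when $|S|=3$. For the forward direction I would argue the contrapositive, exhibiting for every $n\ge 3$ a disconnected cubic generalized Cayley graph of $D_{4n}$. Take the involutory automorphism $\alpha\colon a\mapsto a^{-1},\ b\mapsto b$ and $S=\{b,a^2b,a^{2n-2}b\}$. Since $2n\ge 6$, these are three distinct reflections; as $\omega_\alpha(D_{4n})=\langle a^2\rangle$ we have $S\cap\omega_\alpha(D_{4n})=\emptyset$, and $\alpha(S^{-1})=\alpha(S)=S$, so $S$ is a generalized Cayley subset. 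Using $(a^ib)(a^jb)=a^{i-j}$ one gets $SS^{-1}=\{e,a^{\pm2},a^{\pm4}\}$, hence $\langle SS^{-1}\rangle=\langle a^2\rangle$, which has index $4$ in $D_{4n}$; by Theorem~\ref{thm3.4} the graph $GC(D_{4n},S,\alpha)$ is disconnected. This excludes every $n\ge 3$, so with the standing hypothesis $n\ge 2$ we conclude $n=2$.

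\textbf{Sufficiency.} Let $n=2$, so $G=D_8$. By Lemma~\ref{lem4.0} it suffices to consider one involutory automorphism from each conjugacy class of $\Aut(D_8)\cong D_8$; there are three, with representatives $\sigma_1\colon a\mapsto a,\ b\mapsto a^2b$, $\sigma_2\colon a\mapsto a^{-1},\ b\mapsto b$ and $\sigma_3\colon a\mapsto a^{-1},\ b\mapsto ab$. A direct computation gives $\Omega_{\sigma_1}(D_8)=\Omega_{\sigma_3}(D_8)=\emptyset$, and since a generalized Cayley subset of odd size must meet $\Omega_\alpha$, neither $\sigma_1$ nor $\sigma_3$ admits a cubic generalized Cayley subset. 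For $\sigma_2$ one finds $\omega_{\sigma_2}(D_8)=\{e,a^2\}$, $\Omega_{\sigma_2}(D_8)=\{a,a^3,b,a^2b\}$ and $\mho_{\sigma_2}(D_8)=\{ab,a^3b\}$; the involution $g\mapsto\sigma_2(g^{-1})$ of $D_8$ fixes $\Omega_{\sigma_2}(D_8)$ pointwise and swaps $ab\leftrightarrow a^3b$, and since the requirement $\sigma_2(S^{-1})=S$ says exactly that $S$ is stable under this involution, the cubic generalized Cayley subsets are the eight sets consisting of three elements of $\Omega_{\sigma_2}(D_8)$, or of one element of $\Omega_{\sigma_2}(D_8)$ together with $\{ab,a^3b\}$. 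Up to the isomorphisms provided by Lemma~\ref{lem4.0} with $\beta$ commuting with $\sigma_2$, it is enough to treat the representatives $\{a,a^3,b\}$, $\{a,b,a^2b\}$, $\{a,ab,a^3b\}$ and $\{b,ab,a^3b\}$. For each such $S$ one checks $\langle S\rangle=D_8$ and that $H:=\langle SS^{-1}\rangle$ has index $2$ with $\sigma_2(H)=H$, so $GC(D_8,S,\sigma_2)$ is connected by Theorem~\ref{thm3.4}; moreover $S\cap H=\emptyset$, whence every edge joins $H$ to $Hs$ and the graph is bipartite with parts $H$ and $Hs$. A connected cubic bipartite graph on $8$ vertices is $K_{4,4}$ with a perfect matching deleted, i.e.\ the cube $Q_3$, whose spectrum $\{\pm3,\pm1,\pm1,\pm1\}$ is integral. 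Hence all cubic generalized Cayley graphs of $D_8$ are connected and integral (in fact each is $Q_3$); since such graphs do exist, the equivalence is not vacuous.

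\textbf{Main obstacle.} The necessity half is short. The effort is concentrated in the $n=2$ case: correctly locating the three conjugacy classes of involutions in $\Aut(D_8)$, eliminating two of them through $\Omega_\alpha=\emptyset$, and enumerating the $\sigma_2$-subsets up to the symmetry group $\{\beta:\sigma_2^{\beta}=\sigma_2\}$ without omission or repetition. Once that is done the integrality is automatic, since every graph that survives is $Q_3$; the lone point that warrants an explicit sentence is the elementary fact that a connected cubic bipartite graph on $8$ vertices is forced to be $Q_3$.
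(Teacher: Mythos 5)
Your proof is correct, and its overall skeleton matches the paper's: for $n\ge 3$ you exhibit a cubic generalized Cayley subset of reflections whose difference set generates only $\langle a^2\rangle$, so Theorem~\ref{thm3.4} gives disconnectedness (the paper does exactly this with $\alpha\colon a\mapsto a^{-1},\,b\mapsto a^2b$ and $S=\{a^3b,a^{-1}b,ab\}$; your choice $\alpha\colon a\mapsto a^{-1},\,b\mapsto b$, $S=\{b,a^2b,a^{2n-2}b\}$ works just as well). Where you genuinely diverge is the $n=2$ case. The paper tabulates all five involutions of $\Aut(D_8)$, discards the three with $\Omega=\emptyset$, and then simply asserts that the cubic graphs arising from $\beta_1$ and $\beta_3$ "can be verified" to be isomorphic to $G_4$ of Figure~\ref{fig1}. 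You instead reduce to one representative per conjugacy class via Lemma~\ref{lem4.0} (your three classes and the data $\omega,\Omega,\mho$ agree with Table~\ref{tab2}), enumerate the eight admissible subsets for $\sigma_2=\beta_1$, cut them to four orbit representatives under the centralizer of $\sigma_2$, verify the hypotheses of Theorem~\ref{thm3.4} plus $S\cap\langle SS^{-1}\rangle=\emptyset$, and then invoke the structural fact that a connected cubic bipartite graph on $8$ vertices is $K_{4,4}$ minus a perfect matching, i.e.\ $Q_3$, hence integral. This buys a self-contained, checkable argument that does not lean on the figure or on unstated case-by-case verification, at the cost of the conjugacy/centralizer bookkeeping; the paper's version is shorter but leaves the decisive isomorphism claim to the reader. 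All the computations you rely on (the three conjugacy classes of involutions, the eight subsets and four orbit representatives, $\langle S\rangle=D_8$, the index-$2$, $\sigma_2$-invariant subgroups $\{e,a^2,ab,a^3b\}$, $\{e,a^2,b,a^2b\}$, $\langle a\rangle$, and the $Q_3$ identification) check out, so no gap remains.
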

\begin{proof}
Let $D_{4n}=\langle a,b\mid a^{2n}=b^2=e, bab=a^{-1}\rangle$ and $\alpha:a\mapsto a^{-1}, b\mapsto a^2b$ be an involutory automorphism of $D_{4n}$.
If $n\geq 3$, then $S=\{a^3b, a^{-1}b, ab\}$ is a generalized Cayley subset of $D_{4n}$ induced by $\alpha$.
As  $|\langle SS^{-1}\rangle|=|\langle a^2\rangle|=n<\frac{1}{2}|D_{4n}|$, by Theorem \ref{thm3.4}, $GC(D_{4n},S,\alpha)$ is not connected.
If $n=2$, then all involutory automorphisms of $D_8$ and the corresponding partition of $D_8$  are depicted  in Table \ref{tab2} where we omit $\mho=D_8\setminus (\omega\cup \Omega)$.
\begin{table}[htbp]
\caption{}
\centering
\label{tab2}
\begin{tabular}{llllllllllllll}
\toprule
\makecell[c]{$~\text{Involutions of}~\Aut(D_8)$} & \makecell[c]{$\omega$} & \makecell[c]{$\Omega$} \\
\midrule
\makecell[c]{$\alpha:a\mapsto a, b\mapsto a^2b$} & \makecell[c]{$\{e, a^2\}$} & \makecell[c]{$\emptyset$}
\\
\makecell[c]{$\beta_1:a\mapsto a^{-1}, b\mapsto b$} & \makecell[c]{$\{e, a^2\}$} & \makecell[c]{$\{a, a^3, b, a^2b\}$}
\\
\makecell[c]{$\beta_2:a\mapsto a^{-1}, b\mapsto ab$} & \makecell[c]{$\langle a\rangle$} & \makecell[c]{$\emptyset$}
\\
\makecell[c]{$\beta_3:a\mapsto a^{-1}, b\mapsto a^2b$} & \makecell[c]{$\{e, a^2\}$} & \makecell[c]{$\{a, a^3, ab, a^3b\}$}
\\
\makecell[c]{$\beta_4:a\mapsto a^{-1}, b\mapsto a^3b$} & \makecell[c]{$\langle a\rangle$} & \makecell[c]{$\emptyset$}
\\
\bottomrule
\end{tabular}
\end{table}
According to  Table \ref{tab2},  there is no cubic generalized Cayley graph induced by $\alpha$, $\beta_2$ and $\beta_4$.
In addition, it can be verified that all cubic generalized Cayley graphs induced by  $\beta_1$ and $\beta_3$ are isomorphic to $G_4$ in Figure \ref{fig1}.
Thus, all cubic generalized Cayley graphs of $D_8$ are connected and integral.
\end{proof}

\begin{proposition}\label{cor4.15}
For each of groups  $D_{12}\times \mathbb{Z}_2$, $T_{12}\times \mathbb{Z}_2$ and $D_6\times \mathbb{Z}_4$, there exists  a cubic generalized Cayley graph which is not connected.
\end{proposition}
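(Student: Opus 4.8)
The plan is to present, for each of the three groups, a cubic generalized Cayley graph realized as a direct product $X\times Y$, and then to read off its disconnectedness from Lemmas~\ref{lem4.11} and~\ref{lem4.12}. The underlying elementary fact is that the direct product of a $d$-regular graph with a $d'$-regular graph is $dd'$-regular, so $X\times Y$ is cubic exactly when, after possibly swapping the two factors, $X$ is cubic and $Y$ is a perfect matching; and by the contrapositive of Lemma~\ref{lem4.11}, if either factor is disconnected, then so is $X\times Y$. By Lemma~\ref{lem4.12}, such an $X\times Y$ is a generalized Cayley graph of the product group, with connection set $S_X\times S_Y$ and inducing automorphism $\alpha_X\times\alpha_Y$, which is a genuine involution whenever $\alpha_X$ is an involution and $\alpha_Y=\id$ (or vice versa).

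For $D_{12}\times\mathbb{Z}_2$ and $T_{12}\times\mathbb{Z}_2$ I would take the cubic factor to be one of the disconnected cubic generalized Cayley graphs already exhibited in this section: by the proof of Lemma~\ref{lem4.14} with $n=3$, $GC(D_{12},\{a^3b,a^5b,ab\},\alpha)$ with $\alpha\colon a\mapsto a^{-1},\,b\mapsto a^2b$ is cubic and not connected, and by the proof of Lemma~\ref{lem4.13} with $n=3$, $GC(T_{12},\{b,a^2b,a^4b\},\alpha')$ with $\alpha'\colon a\mapsto a,\,b\mapsto a^3b$ is cubic and not connected. Taking $Y=GC(\mathbb{Z}_2,\{1\},\id)\cong K_2$, Lemma~\ref{lem4.12} then identifies the products with $GC(D_{12}\times\mathbb{Z}_2,\,\{a^3b,a^5b,ab\}\times\{1\},\,\alpha\times\id)$ and $GC(T_{12}\times\mathbb{Z}_2,\,\{b,a^2b,a^4b\}\times\{1\},\,\alpha'\times\id)$; each is a cubic generalized Cayley graph induced by an involution of an order-$24$ group, and each is disconnected because its first factor is.

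For $D_6\times\mathbb{Z}_4$ the two roles must be interchanged: a disconnected cubic graph needs at least $2\cdot4=8$ vertices, so none exists on $|D_6|=6$ vertices, and the disconnectedness has to be supplied by the $\mathbb{Z}_4$-factor. I would take $Z=GC(\mathbb{Z}_4,\{1\},\iota)$, where $\iota$ is the inverse automorphism: one has $\omega_\iota(\mathbb{Z}_4)=\{0,2\}\not\ni 1$ and $\iota(\{1\}^{-1})=\{1\}$, so $Z$ is a $1$-regular generalized Cayley graph with edge set $\{\{0,1\},\{2,3\}\}$, hence disconnected. For the cubic factor, any cubic generalized Cayley graph of $D_6=\langle a,b\mid a^3=b^2=e,\,bab=a^{-1}\rangle$ works; for instance, with $\beta$ the conjugation by $b$ (namely $a\mapsto a^{-1},\,b\mapsto b$) one finds $\omega_\beta(D_6)=\langle a\rangle$, so $S=\{b,ab,a^2b\}$ is a generalized Cayley subset and $GC(D_6,S,\beta)$ is cubic. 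Lemma~\ref{lem4.12} then identifies the product with $GC(D_6\times\mathbb{Z}_4,\,S\times\{1\},\,\beta\times\iota)$, a cubic generalized Cayley graph, and it is disconnected because $Z$ is.

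I do not expect a real obstacle; the remaining work is routine verification --- that each stated pair $(S,\alpha)$ satisfies the defining conditions $S\cap\omega_\alpha(G)=\emptyset$ and $\alpha(S^{-1})=S$ (which, via Lemma~\ref{lem4.12}, reduces to checking the two factors), that the product automorphism is an involution, and that the product is $3$-regular --- and the one place requiring a fresh idea is $D_6\times\mathbb{Z}_4$, where no previous example can be recycled and disconnectedness has to be carried by the cyclic factor. If one prefers to avoid the direct-product lemmas altogether, each exhibited pair can instead be fed directly into Theorem~\ref{thm3.4}: in all three cases $\langle SS^{-1}\rangle$ has order $3$, so $|G:\langle SS^{-1}\rangle|=8>2$, and the graph is not connected.
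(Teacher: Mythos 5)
Your proposal is correct and follows essentially the same route as the paper: for $D_{12}\times\mathbb{Z}_2$ and $T_{12}\times\mathbb{Z}_2$ it recycles the disconnected cubic examples from Lemmas~\ref{lem4.13} and~\ref{lem4.14} and multiplies by $GC(\mathbb{Z}_2,\{1\},\id)\cong K_2$ via Lemmas~\ref{lem4.11} and~\ref{lem4.12}, and for $D_6\times\mathbb{Z}_4$ it lets the disconnected factor $GC(\mathbb{Z}_4,\{1\},\iota)$ carry the disconnectedness, exactly as in the paper (your choice $\beta\colon a\mapsto a^{-1},\,b\mapsto b$ instead of the paper's $b\mapsto a^2b$ is an immaterial variation, and your closing remark via Theorem~\ref{thm3.4} is just an equivalent check).
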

\begin{proof}
By Lemmas \ref{lem4.13} and \ref{lem4.14}, both $D_{12}$ and  $T_{12}$ have cubic generalized Cayley graphs which are not connected.
Let $GC(G,S,\alpha)$ be a not connected cubic generalized  Cayley graph, where $G=D_{12}$ or $T_{12}$.
Then from Lemmas \ref{lem4.11} and \ref{lem4.12}, $GC(G\times \mathbb{Z}_2, S\times \{1\}, \alpha\times \id)$ is   not  connected.

Note that $GC(\mathbb{Z}_4,\{1\},\iota)$ is not a connected generalized Cayley graph of $\mathbb{Z}_4$ and $GC(D_6, \{b,ab,a^2b\},\beta)$ is a generalized Cayley graph of $D_6$ induced by $\beta: a\mapsto a^{-1}, b\mapsto a^2b$.
Then $GC(D_6\times \mathbb{Z}_4, \{b,ab,a^2b\}\times \{1\}, \beta\times \iota)$ is not a connected generalized Cayley graph of $D_6\times \mathbb{Z}_4$.
\end{proof}

\begin{theorem}
Let $G$ be a finite non-abelian group admitting cubic generalized Cayley graph.
Then all cubic generalized Cayley graphs of $G$ are connected and integral if and only if $G$ is isomorphic to the dihedral groups $D_6$, $D_8$ or the quaternion group $Q_8$.
\end{theorem}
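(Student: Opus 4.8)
The plan is to reduce the statement to a bounded list of groups and then settle each one. Since $G$ is assumed to admit some cubic generalized Cayley graph $\Gamma$, the hypothesis ``all cubic generalized Cayley graphs of $G$ are connected and integral'' forces $\Gamma$ itself to be connected and integral; hence, by Lemma \ref{lem4.2}, $|G|=|V(\Gamma)|\in\{4,6,8,10,12,20,24,30\}$, and since $G$ is non-abelian we may discard $|G|=4$. It therefore suffices to run through the non-abelian groups of orders $6,8,10,12,20,24,30$, confirm that $D_6$, $D_8$ and $Q_8$ enjoy the stated property, and exhibit for every other such group some cubic generalized Cayley graph that is disconnected or non-integral.

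For the positive direction, $D_8$ is already covered by Lemma \ref{lem4.14}. For $D_6$ and $Q_8$ I would, using Lemma \ref{lem4.0}, fix a representative of each conjugacy class of involutory automorphisms $\alpha$, compute the partition $G=\omega_\alpha(G)\cup\Omega_\alpha(G)\cup\mho_\alpha(G)$, and enumerate all generalized Cayley subsets $S$ of size $3$; because $|S|$ is odd, each such $S$ meets $\Omega_\alpha(G)$, which drastically narrows the search. Each admissible pair $(S,\alpha)$ is then tested for connectedness via Theorem \ref{thm3.4}, and the resulting graph identified among the finitely many graphs of Lemma \ref{lem4.2}. Concretely, $D_6$ has (up to isomorphism) exactly one cubic generalized Cayley graph, which has $6$ vertices, and every connected cubic graph on $6$ vertices is $K_{3,3}$ or the triangular prism, both integral; in $Q_8$ the outer involutory automorphisms satisfy $\Omega_\alpha(Q_8)=\emptyset$ and contribute nothing, while the inner ones produce, in every case, a graph isomorphic to the $3$-cube, which is integral.

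For the negative direction, Lemma \ref{lem4.13} disposes of the dicyclic groups $T_{12},T_{20},T_{24}$, Lemma \ref{lem4.14} of $D_{12},D_{20},D_{24}$, and Proposition \ref{cor4.15} --- together with the remark that the direct product of a cubic generalized Cayley graph with a $1$-regular one is again a cubic generalized Cayley graph (Lemma \ref{lem4.12}) and is disconnected whenever a factor is (Lemma \ref{lem4.11}) --- of $D_{12}\times\mathbb{Z}_2$, $T_{12}\times\mathbb{Z}_2$, $D_6\times\mathbb{Z}_4$, and hence also of $A_4\times\mathbb{Z}_2$ and $\mathbb{Z}_2^2\times S_3\cong D_{12}\times\mathbb{Z}_2$. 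For a direct product $\mathbb{Z}_p\times H$ with $p$ an odd prime --- for instance $\mathbb{Z}_3\times D_8$, $\mathbb{Z}_3\times Q_8$, $\mathbb{Z}_5\times S_3$, $\mathbb{Z}_3\times D_{10}$ --- I would take $\alpha=\id\times\beta$ with $\beta$ an involutory automorphism of $H$ admitting a size-$3$ generalized Cayley subset $S_0$, and put $S=\{0\}\times S_0$; then $\langle S\rangle$ lies inside $\{0\}\times H$, a proper subgroup of $\mathbb{Z}_p\times H$, so $GC(\mathbb{Z}_p\times H,S,\alpha)$ is disconnected by Theorem \ref{thm3.4}. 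For each of the remaining groups --- $A_4$, $D_{10}$, $F_{20}=\mathbb{Z}_5\rtimes\mathbb{Z}_4$, $D_{30}$, $S_4$, $SL(2,3)$, $\mathbb{Z}_3\rtimes\mathbb{Z}_8$ and the last non-abelian group of order $24$ --- I would display an explicit involutory automorphism $\alpha$ and a size-$3$ generalized Cayley subset $S$ for which either $\langle S\rangle$ is a proper subgroup or $|G:\langle SS^{-1}\rangle|>2$, so that Theorem \ref{thm3.4} gives a disconnected cubic generalized Cayley graph (e.g. $S=\{(12)(34),(12),(34)\}$ with the conjugation by $(12)(34)$ works for $S_4$, with $\langle S\rangle$ a Klein four-group). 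The one exception is $D_{10}$, all of whose cubic generalized Cayley graphs turn out to be connected; there one instead checks that the unique such graph contains a $4$-cycle, hence is not the Petersen graph --- the only connected cubic integral graph on $10$ vertices --- and so is not integral.

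The main obstacle is exactly this last step. There are twelve non-abelian groups of order $24$, and even after the reductions above several of them ($S_4$, $SL(2,3)$, $\mathbb{Z}_3\rtimes\mathbb{Z}_8$, and one more) must be treated individually: one has to pin down the conjugacy classes of involutory automorphisms, compute $\omega_\alpha(G)$, $\Omega_\alpha(G)$, $\mho_\alpha(G)$, and produce a witnessing $S$, while being careful not to overlook any size-$3$ subset that would give a connected integral graph. A lighter secondary point is that for $D_6$ and $Q_8$ one must genuinely verify integrality of the small graphs that arise, which is precisely where the explicit census in Lemma \ref{lem4.2} (Figure \ref{fig1}) is doing real work.
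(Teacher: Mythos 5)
Your overall route is the same as the paper's: reduce to $|G|\in\{6,8,10,12,20,24,30\}$ via Lemma \ref{lem4.2}, settle $D_6$, $D_8$, $Q_8$ positively, and eliminate every other group by producing a cubic generalized Cayley graph that is disconnected (via Theorem \ref{thm3.4}, Lemmas \ref{lem4.13}, \ref{lem4.14} and the direct-product trick of Proposition \ref{cor4.15}) or non-integral. However, your treatment of $D_{10}$ has a genuine gap. The Petersen graph is \emph{not} the only connected cubic integral graph on $10$ vertices: among the thirteen graphs of Lemma \ref{lem4.2} there is a second one on $10$ vertices, a bipartite graph (the complement inside $K_{5,5}$ of $C_6\cup C_4$, with spectrum $\pm 3,\pm 2,\pm 1,\pm 1,0,0$), and that graph does contain $4$-cycles. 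Worse, the graph you must analyse, $GC(D_{10},\{b,ab,a^4b\},\alpha)$ with $\alpha\colon a\mapsto a^{-1},\ b\mapsto b$, is itself bipartite (every edge joins a rotation to a reflection; equivalently $\langle SS^{-1}\rangle=\langle a\rangle$ has index $2$), so it was never a candidate for the Petersen graph and exhibiting a $4$-cycle rules out nothing. To close this case you must compare the graph with the bipartite $10$-vertex graph in the census (as the paper does via Figure \ref{fig1}), or compute its spectrum directly: its biadjacency matrix is the $5\times 5$ matrix with $(i,j)$-entry $1$ exactly when $i+j\in\{0,1,4\}\pmod 5$, whose singular values are $3$ and $\bigl|1+2\cos(2\pi k/5)\bigr|$ for $k=1,2$ (each twice), which are irrational, so the graph is not integral.

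A second step that fails as written is $SL(2,3)$. You propose to display an involutory automorphism and a size-$3$ generalized Cayley subset $S$ with $\langle S\rangle$ proper, but no such $S$ exists: for representatives of both conjugacy classes of involutory automorphisms $\alpha$ of $SL(2,3)$ one has $\Omega_\alpha(SL(2,3))=\emptyset$, and since a generalized Cayley subset of odd size must meet $\Omega_\alpha$, the group admits no cubic generalized Cayley graph at all. It is therefore excluded by the hypothesis of the theorem (``admitting cubic generalized Cayley graph''), not by a disconnected witness; this is precisely the computation the paper carries out, and your outline asserts a false statement for this group even though the theorem's conclusion is unaffected once the computation is done. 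Apart from these two points, your case analysis (including the $\mathbb{Z}_p\times H$ witnesses for the order-$24$ and order-$30$ products and the explicit subsets for $A_4$, $S_4$, $F_{5,4}$, $U_{24}$, $V_{24}$, $D_{30}$) matches the paper's argument.
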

\begin{proof}
Since $G$ is non-abelian, its order cannot be equal to $4$.
By Lemma \ref{lem4.2}, the order of $G$ belongs to$\{6,8,10,12,20,24,30\}$.
We divide the proof into the following cases:

\vspace{2mm}

\textbf{Case 1}: Let $|G|=6$.
Then $G\cong D_6$.
Observe that  the generalized Cayley graphs of $D_6$ are $GC(D_6, \{b, ab, a^2b\}, \alpha_j)$, where $\alpha_j: a\mapsto a^{-1}, b\mapsto a^jb$ for $0\leq j\leq 2$.
It can be verified that these  generalized Cayley graphs are isomorphic to $G_2$ in Figure \ref{fig1}.

\vspace{2mm}

\textbf{Case 2}: Let $|G|=8$.
Then $G\cong D_8$ or $Q_8=\langle a, b\mid a^4=e, a^2=b^2, b^{-1}ab=a^{-1}\rangle$.

Since $\Aut(Q_8)\cong Sym(4)$, the involutory automorphisms of $Q_8$ are divided into two conjugate classes.
Let $\alpha: a\mapsto a^{-1}, b\mapsto ab$ and $\beta: a\mapsto a^{-1}, b\mapsto b$ be two involutory automorphisms in different conjugate classes of $\Aut(Q_8)$.
By Lemma \ref{lem4.0}, it suffices to discuss the generalized Cayley graphs induced by $\alpha$ and $\beta$.
Since $\omega_\alpha(Q_8)=\langle a\rangle$ and $\Omega_\alpha(Q_8)=\emptyset$, there is no cubic generalized Cayley graph induced by $\alpha$.
With respect to $\beta$, we obtain three subsets $\omega_\beta(Q_8)=\{e,a^2\}$, $\Omega_\beta(Q_8)=\{a,a^3,ab,a^3b\}$ and $\mho_\beta(Q_8)=\{b,a^2b\}$.
Thus, there are some cubic generalized Cayley graphs.
It is easy to verify that all cubic generalized Cayley graphs of $Q_8$ induced by $\beta$ are isomorphic to $G_4$ in Figure \ref{fig1}.
Therefore, all cubic generalized Cayley graphs of $Q_8$ are connected and integral.

In addition, by Lemma \ref{lem4.14}, all cubic generalized Cayley graphs of $D_8$ are connected and integral.

\vspace{2mm}

\textbf{Case 3}: Let $|G|=10$. Then $D\cong D_{10}$.
Let $S=\{b, ab, a^4b\}$ be a generalized Cayley subset of $D_{10}$ induced by $\alpha: a\mapsto a^{-1}, b\mapsto b$ and the corresponding graph is displayed in Figure \ref{fig3}.
 Notice that $GC(D_{10}, S, \alpha)$ is  connected  but  it not isomorphic to any graph in Figure \ref{fig1}.
\begin{figure}[htbp]
\centering
\includegraphics[scale=0.5]{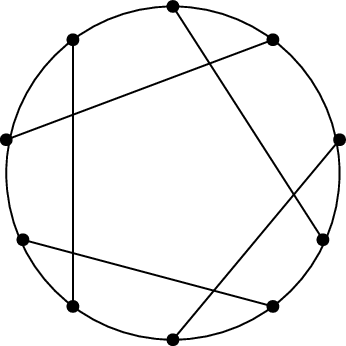}
\caption{}
\label{fig3}
\end{figure}

\vspace{2mm}

\textbf{Case 4}: Let $|G|=12$.
Then $G\cong D_{12}$, $T_{12}$  or $A_4$.
By Lemmas \ref{lem4.13} and \ref{lem4.14}, there exists a cubic generalized Cayley graph of $D_{12}$ and $T_{12}$ which is not connected, respectively.
For $A_4$, let $S=\{(123), (124), (12)(34)\}$, which is a generalized Cayley subset induced by $\alpha:x\mapsto (12)(34)x(12)(34)$ for all $x\in A_4$.
As  $|\langle SS^{-1}\rangle|=|\langle (243)\rangle|< \frac{1}{2}|A_4|$, by Theorem \ref{thm3.4}, $GC(G,S,\alpha)$ is  not connected.

\vspace{2mm}

\textbf{Case 5}: Let $|G|=20$.
Then $G\cong D_{20}$, $T_{20}$ or $F_{5,4}=\langle a, b \mid a^5=b^4=e, b^{-1}ab=a^2\rangle$.
Using Lemmas \ref{lem4.13} and \ref{lem4.14} again,  there is no connected generalized Cayley graphs for both  $D_{20}$ and $T_{20}$.
For $F_{5,4}$, let $\alpha:a\mapsto a^{-1}, b\mapsto b$ be its involutory automorphism  and $S=\{b, b^2, b^3\}$ a generalized Cayley subset of $F_{5,4}$ induced by $\alpha$.
Since $\langle S\rangle\neq F_{5,4}$, by Theorem \ref{thm3.4}, $GC(G,S,\alpha)$ is not connected.

\vspace{2mm}

\textbf{Case 6}: Let $|G|=24$.
Then $G\cong D_{12}\times \mathbb{Z}_2$, $T_{12}\times \mathbb{Z}_2$, $D_6\times \mathbb{Z}_4$, $A_4\times \mathbb{Z}_2$, $Q_8\times \mathbb{Z}_3$, $D_{24}$, $T_{24}$, $Sym(4)$, $D_8\times \mathbb{Z}_3$, $U_{24}=\langle a,b \mid a^8=b^3=e, a^{-1}ba=b^{-1}\rangle$, $V_{24}=\langle a,b \mid a^6=b^4=e, ba=a^{-1}b^{-1}, b^{-1}a=a^{-1}b\rangle$ or $SL(2,3)$.

The groups $ D_{12}\times \mathbb{Z}_2$, $T_{12}\times \mathbb{Z}_2$ and  $D_6\times \mathbb{Z}_4$ are discussed in Proposition \ref{cor4.15}.
As stated in Case $4$,  there exists  a not connected cubic generalized  Cayley graph of $A_4$.
With the same argument in Proposition \ref{cor4.15}, it follows that  there exists  a not connected cubic generalized  Cayley graph of $A_4\times \mathbb{Z}_2$.

For $Q_8\times \mathbb{Z}_3$, let $\alpha_1: (e,1)\mapsto (e,1), (a,0)\mapsto (a^{-1},0), (b,0)\mapsto (b,0)$ be an involutory automorphism  and $S_1=\{(a,0),(a,1),(a,2)\}$  a generalized Cayley subset of $Q_8\times \mathbb{Z}_3$ induced by $\alpha_1$.
Since $\langle S_1\rangle\neq Q_8\times \mathbb{Z}_3$, $GC(Q_8\times \mathbb{Z}_3, S_1, \alpha_1)$ is not connected by Theorem \ref{thm3.4}.

For $D_{24}$, let $\alpha_2: a\mapsto a^5, b\mapsto b$ be its involutory automorphism  and $S_2=\{a^2, a^6, a^{10}\}$  a generalized Cayley subset induced by $\alpha_2$.
Clearly, $\langle S_2\rangle\neq D_{24}$, hence by Theorem \ref{thm3.4}, $GC(D_{24}, S_2, \alpha_2)$ is not connected.

For $T_{24}$, by Lemma \ref{lem4.13}, there exists a not connected cubic generalized Cayley graph of it.

For $Sym(4)$, let $\alpha_3: x\mapsto (12)x(12)$ be its involutory automorphism  and $S_3=\{(12), (13), (23)\}$ a generalized Cayley subset induced by $\alpha_3$.
Then $GC(Sym(4), S_3, \alpha_3)$ is not connected by $\langle S_3\rangle\neq Sym(4)$.

For $D_8\times Z_3$, let  $\alpha_4:(e,1)\mapsto (e,1), (a,0)\mapsto (a^{-1},0), (b,0)\mapsto (a^2b,0)$ be its involutory automorphism  and $S_4=\{(a,0),(a,1),(a,2)\}$  a generalized Cayley subset induced by $\alpha_4$.
Then $GC(D_8\times Z_3, S_4, \alpha_4)$ is not connected by $\langle S_4\rangle\neq D_8\times Z_3$.

For $U_{24}$, let $\alpha_5: a\mapsto a, b\mapsto b^{-1}$ be its involutory automorphism and $S_5=\{a, a^7, a^4\}$ a generalized Cayley subset induced by $\alpha_5$.
Then $GC(U_{24}, S_5, \alpha_5)$ is not connected by $\langle S_5\rangle\neq U_{24}$.

For $V_{24}$, let $\alpha_6: a\mapsto a, b\mapsto b^{-1}$ be its involutory automorphism  and $S_6=\{a, a^5, a^3\}$ a generalized Cayley subset induced by $\alpha_6$.
Then $GC(V_{24}, S_6, \alpha_6)$ is  not connected as $\langle S_6\rangle\neq V_{24}$.

Finally, we show that there is no cubic generalized Cayley graph of $SL(2,3)=\langle A, B\rangle$, where
\begin{equation*}
A=
\begin{bmatrix}
0 & 1\\
2 & 0\\
\end{bmatrix},
B=
\begin{bmatrix}
1 & 1\\
0 & 1\\
\end{bmatrix}.
\end{equation*}
Observe that there are $9$ involutory automorphisms of $SL(2,3)$ which are divided into two conjugate classes.
Let
\begin{equation*}
\alpha_7:
\begin{bmatrix}
0 & 1\\
2 & 0\\
\end{bmatrix}
\mapsto
\begin{bmatrix}
0 & 2\\
1 & 0\\
\end{bmatrix},
\begin{bmatrix}
1 & 1\\
0 & 1\\
\end{bmatrix}
\mapsto
\begin{bmatrix}
0 & 1\\
2 & 2\\
\end{bmatrix}
\end{equation*}
and
\begin{equation*}
\alpha_{8}:
\begin{bmatrix}
0 & 1\\
2 & 0\\
\end{bmatrix}
\mapsto
\begin{bmatrix}
2 & 1\\
1 & 1\\
\end{bmatrix},
\begin{bmatrix}
1 & 1\\
0 & 1\\
\end{bmatrix}
\mapsto
\begin{bmatrix}
1 & 2\\
0 & 1\\
\end{bmatrix}
\end{equation*}
be two involutory automorphisms in different conjugate classes.
For $\alpha_7$, we obtain subsets $\omega_{\alpha_7}(SL(2,3))=\{E, A, A^2, A^3, B^2A^3B, B^2AB\}$ and $\Omega_{\alpha_7}(SL(2,3))=\emptyset$.
Similarly, with respect to $\alpha_{8}$, we have $\omega_{\alpha_{8}}(SL(2,3))=\{E, B, B^2, A^2, AB^2A^3B, AB^2AB, B^2A^2, AB,\linebreak B^2A^3, A^3B, BA^2, B^2A\}$ and $\Omega_{\alpha_{8}}(SL(2,3))=\emptyset$.
It follows that there is no cubic generalized Cayley graph of $SL(2,3)$.

\vspace{2mm}

\textbf{Case 7}: Let $|G|=30$.
Then $G\cong D_{30}, U_{30}=\langle a,b \mid a^{10}=b^3=e, a^{-1}ba=b^{-1}\rangle$ or $D_{10}\times \mathbb{Z}_3$.

Let $\alpha_9: a\mapsto a^4, b\mapsto b$ be an involutory automorphism of $D_{30}$ and $S_9=\{b, a^5b, a^{10}b\}$ a generalized Cayley subset induced by $\alpha_9$.
Then $GC(D_{30}, S_9, \alpha_9)$ is not connected as $\langle S_9\rangle\neq D_{30}$.
Similarly, let $\alpha_{10}: a\mapsto a, b\mapsto b^{-1}$ be an involutory automorphism of $U_{30}$ and $S_{10}=\{a, a^{-1}, a^5\}$ a generalized Cayley subset induced by $\alpha_{10}$.
Then $GC(U_{30}, S_{10}, \alpha_{10})$ is not connected as $\langle S_{10}\rangle\neq U_{30}$.
For the group $D_{10}\times \mathbb{Z}_3$, $\alpha_{11}:(e,1)\mapsto (e,1), (a,0)\mapsto (a^{-1},0), (b,0)\mapsto (a^2b,0)$ is its involutory automorphism  and $S_{11}=\{(a,0),(a,1),(a,2)\}$  is a generalized Cayley subset induced by $\alpha_{11}$.
Since $\langle S_{11}\rangle\neq D_{10}\times \mathbb{Z}_3$, $GC(D_{10}\times \mathbb{Z}_3, S_{11}, \alpha_{11})$ is not connected.

In conclusion, among all non-abelian groups admitting a cubic generalized Cayley graph, the groups $D_6$, $D_8$ and $Q_8$ satisfy that whose all cubic generalized Cayley graphs are connected and integral.
\end{proof}

\frenchspacing

\end{sloppypar}

\end{document}